\documentclass{birkjour}

\usepackage{amsmath, amsthm, amsfonts, amssymb}

\usepackage{hyperref}

\usepackage{enumerate}
\usepackage{paralist}

\usepackage{mathrsfs}
\usepackage{bm}

\newtheorem{thm}{Theorem}[section]
\newtheorem{cor}[thm]{Corollary}

\newtheorem{prop}[thm]{Proposition}
\theoremstyle{definition}
\newtheorem{dfn}[thm]{Definition}
\theoremstyle{remark}
\newtheorem{rem}[thm]{Remark}
\newtheorem{example}[thm]{Example}

\numberwithin{equation}{section}

\DeclareMathOperator{\mis}{\mathfrak{M}}
\DeclareMathOperator{\Lip}{Lip}

\DeclareMathOperator{\supp}{supp}

\newcommand{\tp}{\otimes}
\newcommand{\htp}{\mathbin{\hat{\otimes}}}
\newcommand{\dm}{\mathbin \diamond}

\newcommand{\e}{\epsilon}

\renewcommand{\c}{\mathscr{C}}
\newcommand{\p}{\mathscr{P}}

\newcommand{\R}{\mathbb{R}}
\newcommand{\C}{\mathbb{C}}
\newcommand{\N}{\mathbb{N}}

\newcommand{\U}{\mathbf{1}}

\newcommand{\A}{{\bm A}}
\newcommand{\fA}{\mathfrak{A}}

\newcommand{\cF}{\mathcal{F}}
\newcommand{\cE}{\mathcal{E}}

\newcommand{\cX}{\mathcal{X}}
\newcommand{\cY}{\mathcal{Y}}

\newcommand{\set}[1]{\{#1\}}
\newcommand{\bigset}[1]{\bigl\{ #1 \bigr\}}

\newcommand{\biggset}[1]{\biggl\{ #1 \biggr\}}

\newcommand{\bigprn}[1]{\bigl( #1 \bigr)}
\newcommand{\Bigprn}[1]{\Bigl( #1 \Bigr)}

\newcommand{\enorm}{\lVert\,\cdot\,\rVert}
\newcommand{\norm}[1]{\lVert #1 \rVert}
\newcommand{\bignorm}[1]{\bigl\lVert #1 \bigr\rVert}
\newcommand{\Bignorm}[1]{\Bigl\lVert #1 \Bigr\rVert}

\newcommand{\tmp}{Tomiyama product}

\begin{document}

\title[Regularity conditions for vector-valued function algebras]%
{Regularity conditions for vector-valued function algebras}

\author{Z. Barqi}
\address{%
School of Mathematics and Computer Sciences,\\
Damghan University, P.O.BOX 36715-364, Damghan, Iran.}
\email{zakiehbarqi@std.du.ac.ir}

\author{M.~Abtahi}
\address{%
School of Mathematics and Computer Sciences,\\
Damghan University, P.O.BOX 36715-364, Damghan, Iran.}
\email{abtahi@du.ac.ir}

\thanks{Corresponding author (M. Abtahi) Tel: +982335220092, Email: abtahi@du.ac.ir}

\subjclass{Primary 46J10; Secondary 46J20.}

\keywords{Vector-valued function algebra, Regularity condition, Ditkin's condition,
Bounded relative unit, Tensor product.}

\begin{abstract}
  We consider several notions of regularity, including strong regularity,
  bounded relative units, and Ditkin's condition, in the setting of vector-valued function
  algebras. Given a commutative Banach algebra $A$ and a compact space $X$, let $\A$ be
  a Banach $A$-valued function algebra on $X$ and let $\fA$ be the subalgebra of
  $\A$ consisting of scalar-valued functions. This paper is about the connection between
  regularity conditions of the algebra $\A$ and the associated algebras $\fA$ and $A$.
  That $\A$ inherits a certain regularity condition $\p$ to $\fA$ and $A$ is the easy part
  of the problem. We investigate the converse and show that, under certain conditions,
  $\A$ receives $\p$ form $\fA$ and $A$. The results apply to tensor products of
  commutative Banach algebras as they are included in the class of vector-valued
  function algebras.
\end{abstract}

\maketitle

\section{Introduction}
\label{sec:intro}

In this paper, we shall investigate problems concerning several regularity conditions
of vector-valued function algebras on compact Hausdorff spaces. Assume that $X$ is
a compact Hausdorff space and that $A$ is a commutative unital Banach algebra over
the complex field $\C$. The connection between certain Banach-algebraic properties
of the algebra $A$ and the associated $A$-valued function algebra
$\c(X, A)$ is investigated in \cite{Nikou-Ofarrell-Ditkin}. These properties concern Ditkin's condition
and bounded relative units. It is proved that $\c(X,A)$ is a (strong) Ditkin algebra if and only if $A$
is one, and that $\c(X,A)$ admits bounded relative units if and only if $A$ does.
Also in \cite{Nikou-Ofarrell-Ditkin}, the relationship between these properties in a special class of
admissible Banach $A$-valued function algebras is considered.

We consider several notions of regularity, including strong regularity, bounded relative
units, and Ditkin's condition, in the general setting of vector-valued function algebras.
To every Banach $A$-valued function algebra $\A$ is associated a complex function algebra $\fA$,
namely, the subalgebra of $\A$ consisting of scalar-valued functions. In the presence of continuous
homomorphisms from $\A$ onto $\fA$ and $A$, many Banach algebraic properties of $\A$ are inherited
by $\fA$ and $A$. The interesting part is whether $\A$ receives a certain property from $\fA$ and $A$.
More precisely, we let $\p$ be a regularity condition that an algebra may satisfy. Our main concern
is to establish an implication as follows;
\begin{equation}\label{P}
     \text{$\A$ has $\p$ if and only if $\fA$ and $A$ have $\p$}.
\end{equation}

We will see that \eqref{P} holds for $\p=$ regularity and $\p=$ bounded relative units.
If $\fA A$, the subalgebra of $\A$ generated by $\fA\cup A$, is dense in $\A$, then \eqref{P}
is true for $\p=$ strong regularity and $\p=$ strong Ditkin's condition. If $\A$ is an $A$-valued
uniform algebra and $\fA$ admits bounded relative units, then \eqref{P} holds for $\p=$ strong regularity
and $\p=$ (strong) Ditkin's condition. The results apply to tensor products of commutative Banach algebras,
as they are included in the class of vector-valued function algebras.

This paper is outlined as follows. Section \ref{sec:pre} contains preliminary materials about
vector-valued function algebras. Also, basic definitions and concepts regarding several notions
of regularity are gathered in Section \ref{sec:pre}. Main results of the paper appear in
Section \ref{sec:main}; it is divided into subsections each of which devoted to establishing
\eqref{P} for a specific regularity condition $\p$.

\section{Preliminaries}
\label{sec:pre}

Let $A$ be a commutative unital Banach algebra over
the complex field $\C$. Every nonzero homomorphism $\phi:A\to\C$ is called
a \emph{character} of $A$. Denoted by $\mis(A)$, the set of all characters
of $A$ is nonempty and its elements are automatically continuous \cite[Lemma 2.1.5]{CBA}.
Consider the Gelfand transform $\hat A=\set{\hat a:a\in A}$, where $\hat a:\mis(A)\to\C$
is defined by $\hat a(\phi)=\phi(a)$, $\phi\in\mis(A)$. The \emph{Gelfand topology} of $\mis(A)$ is
the weakest topology with respect to which every $\hat a\in \hat A$ is continuous.
Endowed with the Gelfand topology, $\mis(A)$ is a compact Hausdorff space,
and the Gelfand transform $A \to \hat A$, $a\mapsto \hat a$, is an algebra homomorphism of $A$
onto the subalgebra $\hat A$ of continuous complex functions on $\mis(A)$. It is well-known
that the Gelfand transform is one-to-one if and only if $A$ is semisimple.
For more on the theory of commutative Banach algebras see, for example,
\cite{BD,Dales,CBA}.

Let $X$ be a compact Hausdorff space, and let $\c(X)$ be the algebra of all
continuous complex functions on $X$, equipped with the uniform norm.
A \emph{Banach function algebra} on $X$ is a subalgebra $\fA$ of $\c(X)$ that
separates the points of $X$, contains the constant functions, and is equipped with some
complete algebra norm $\enorm$. If the norm of $\fA$ is equivalent to the uniform norm
on $X$, then $\fA$ is called a \emph{uniform algebra} on $X$.

If $\fA$ is a Banach function algebra on $X$, for every $x\in X$, the evaluation
homomorphism $\e_x:\fA\to\C$, $f\mapsto f(x)$, is a character of $\fA$, and the mapping
$X\to\mis(\fA)$, $x\mapsto\e_x$, imbeds $X$ onto a compact subset of $\mis(\fA)$.
When this mapping is surjective we call $\fA$ \emph{natural} and $\mis(\fA)=X$;
\cite[Definition 4.1.3]{Dales}. For example, $\c(X)$ is a natural uniform algebra.

Complex function algebras are important and have been extensively investigated by
many authors. Note that every semisimple commutative unital Banach algebra $A$ can be seen,
through its Gelfand transform, as a natural Banach function algebra on $\cX=\mis(A)$.
See, for example, \cite[Chap.~4]{Dales} and \cite[Chap.~2]{CBA}.

\subsection{Vector-valued function algebras}
Throughout the paper, unless otherwise stated, $X$ is a compact Hausdorff space,
and $A$ is a semisimple commutative unital Banach algebra. The space of all continuous
$A$-valued functions on $X$ is denoted by $\c(X,A)$. Algebraic operations are defined in
the obvious way. The uniform norm of a function $f\in\c(X,A)$ is defined by
$\|f\|_X=\sup\set{\|f(x)\|:x\in X}$.

Given an element $a\in A$, the same notation $a$ is used for the constant
function given by $a(x)=a$, for all $x\in X$, and $A$ is regarded
as a closed subalgebra of $\c(X,A)$. We denote the unit element of $A$ by $\U$, and
identify $\C$ with the closed subalgebra $\C\U=\set{\alpha\U:\alpha\in\C}$ of $A$.
Therefore, every function $f\in\c(X)$ can be seen as an $A$-valued function
$x\mapsto f(x)\U$; we use the same notation $f$ for this function,
and regard $\c(X)$ as a closed subalgebra of $\c(X,A)$.

\begin{dfn}[\cite{Abtahi-BJMA,Nikou-Ofarrell-2014}]
  A subalgebra $\A$ of $\c(X,A)$ is called a \emph{Banach $A$-valued function algebra} on $X$
  if
  \begin{enumerate}
    \item $\A$ contains the constant functions $X\to A$, $x\mapsto a$, with $a\in A$,
    \item $\A$ separates the points of $X$ in the sense that, for every pair $x, y$ of distinct
          points in $X$, there exists $f\in \A$ such that $f(x)\neq f(y)$,
    \item $\A$ is equipped with some complete algebra norm $\enorm$.
  \end{enumerate}
  If the norm on $\A$ is equivalent to the uniform norm $\enorm_X$, then $\A$ is called
  an \emph{$A$-valued uniform algebra}.
\end{dfn}

We are mostly interested in those $A$-valued function algebras that are invariant under
composition with characters of $A$. By definition, an $A$-valued function algebra $\A$ is called
\emph{admissible} if $\phi\circ f \in \A$ whenever $f\in \A$ and $\phi\in \mis(A)$.
When $\A$ is admissible, we let $\fA$ be the subalgebra of $\A$ consisting of
scalar-valued functions; that is, $\fA=\A\cap\c(X)$. In this case, $\fA=\set{\phi\circ f : f\in\A}$,
for every $\phi\in\mis(A)$. The algebra $\c(X,A)$ is the most basic example of an admissible $A$-valued
uniform algebra with $\fA=\c(X)$. Also, if $(X, A, \fA, \A)$ is an admissible quadruple, in the sense
of \cite{Nikou-Ofarrell-2014}, then $\A$ is an admissible Banach $A$-valued function algebra
on $X$ and $\fA = \A \cap \c(X)$.

\begin{rem}
  Tensor products of commutative Banach algebras can be regarded as vector-valued function algebras.
  Indeed, suppose that $\fA$ and $A$ are Banach function algebras on their character spaces
  $\cX=\mis(\fA)$ and $\cY=\mis(A)$, respectively. Consider the algebraic tensor product $\fA\otimes A$
  and identify every tensor element $f \tp a$ in $\fA\otimes A$ with the $A$-valued function
  \begin{equation*}
    \hat f\cdot a:\cX\to A, \quad \xi \mapsto \hat f(\xi)a = \xi(f)a.
  \end{equation*}

  If $\gamma$ is an algebra cross-norm on $\fA\otimes A$, not less than the injective tensor product norm,
  then its completion $\fA \hat\tp_\gamma A$ forms a Banach $A$-valued function algebra on $\cX$.
  Similarly, $\fA \hat\tp_\gamma A$ can be regarded as a Banach $\fA$-valued function algebra on $\cY$.
  Adopting the terminology from \cite{Nikou-Ofarrell-Ditkin}, we call $\fA \hat\tp_\gamma A$ a \emph{\tmp}
  of $\fA$ and $A$.
  By a theorem of Tomiyama \cite{Tomiyama},
  $\mis(\fA \hat\tp_\gamma A) = \mis(\fA)\times \mis(A)$. More details can be found in
  \cite[Theorem 6.8]{Abtahi-Farhangi-spectra} and \cite[Proposition 1.6]{Nikou-Ofarrell-Ditkin}.
  For background on cross norms and tensor products of Banach algebras,
  see \cite[Section 2.11]{CBA} and \cite{Ryan}.
\end{rem}

Identifying the character space of a vector-valued function algebra is an interesting problem.
Let $\A$ be an admissible Banach $A$-valued function algebra on $X$, with
$\fA=\A\cap \c(X)$. Take characters $\psi\in\mis(\fA)$ and $\phi\in \mis(A)$ and define
\begin{equation*}
  \psi \diamond \phi :\A\to\C,\quad
  \psi \diamond \phi(f)=\psi(\phi\circ f).
\end{equation*}

Then $\psi \diamond \phi$ is a character of $\A$. In \cite{Abtahi-character-space}, it is shown that,
under certain conditions, the converse is also true; that is, for every $\tau\in\mis(\A)$ there exist
$\psi\in\mis(\fA)$ and $\phi\in \mis(A)$ such that $\tau=\psi \diamond \phi$.
In this case, $\mis(\A)=\mis(\fA)\times \mis(A)$.

\begin{dfn}
  Let $\A$ be an admissible Banach $A$-valued function algebra on $X$.
  It is said that $\A$ is \emph{natural} on $X$ if every character $\tau\in\mis(\A)$ is of the form
  $\tau=\e_x\dm \phi$, for some $x\in X$ and  $\phi\in \mis(A)$.
\end{dfn}

When $\A$ is natural, $\mis(\A)$ is homeomorphic to $X\times \mis(A)$.
Hausner, in \cite{Hausner}, proved that $\{\e_x\dm \phi:x\in X, \phi\in\mis(A)\}$
are the only characters of $\c(X,A)$ so that it is a natural $A$-valued uniform
algebra.

\bigskip

We conclude the section by presenting some examples.

\begin{example}
  Suppose that $X$ is a compact set in the complex plane $\C$.
  Let $P_0(X,A)$ be the algebra of
  the restriction to $X$ of all $A$-valued polynomials of the form
  $p(z) = a_nz^n+\dotsb+a_1z+a_0$, where $n\in\N_0$ and $a_0,a_1,\dotsc,a_n\in A$.
  Also, let $R_0(X,A)$ be the algebra of the restriction to $X$ of all rational
  functions of the form $p(z)/q(z)$, where $p(z)$ and $q(z)$ are $A$-valued polynomials
  and $q(z)$ is invertible in $A$, for $z \in X$. We denote the uniform closures of
  $P_0(X,A)$ and $R_0(X,A)$ by $P(X,A)$ and $R(X,A)$, respectively, and
  write $P(X)$ and $R(X)$ instead of $P(X,\C)$ and $R(X,\C)$. Then
  $P(X,A)$ and $R(X,A)$ are admissible $A$-valued uniform algebras on $X$, with
    \begin{equation*}
       P(X) = P(X,A) \cap \c(X), \quad R(X) = R(X,A) \cap \c(X).
    \end{equation*}
  By \cite[Section 2.5]{CBA}, the character space of
  $P(X)$ is homeomorphic to $\hat X$, the polynomially convex hull of $X$, and
  the character space of $R(X)$ is homeomorphic to $X$.
  A discussion in \cite[Section 2.2]{Abtahi-BJMA}, shows that $P(X,A)$ is isometrically
  isomorphic to the injective tensor product $P(X)\htp_\e A$, so that
  \begin{equation*}
    \mis(P(X,A)) = \mis(P(X))\times \mis(A) = \hat X\times \mis(A).
  \end{equation*}

  Moreover, by \cite[Theorem 2.6]{Abtahi-BJMA}, every function $f\in R_0(X,A)$ can be approximated
  uniformly on $X$ by $A$-valued rational functions of the form $r(z)=r_1(z)a_1+ \dotsb +r_n(z)a_n$,
  where $r_i(z)$, for $1\leq i \leq n$, are complex rational functions in $R(X)$.
  Therefore, the algebra $R(X,A)$ is isometrically isomorphic
  to the injective tensor product $R(X)\htp_\e A$ and
  \begin{equation*}
     \mis(R(X,A)) = \mis(R(X))\times \mis(A) = X\times \mis(A).
  \end{equation*}
\end{example}

\begin{example}\label{exa:Lip(X,A)}
  Let $(X,\rho)$ be a compact metric space. A mapping $f : X \to A$ is called
  an \emph{$A$-valued Lipschitz operator} if
  \begin{equation}
    L(f)=\sup\biggset{\frac{\norm{f(x)-f(y)}}{\rho(x,y)}:x,y\in X,\, x\neq y}<\infty.
  \end{equation}

  The space of $A$-valued Lipschitz operators on $X$ is denoted by $\Lip(X,A)$.
  For any $f\in \Lip(X,A)$, the \emph{Lipschitz norm} of $f$ is defined by
  $\norm{f}_L = \norm{f}_X + L(f)$. In this setting, $\bigprn{\Lip(X,A),\enorm_L}$
  is an admissible Banach $A$-valued function algebra on $X$ with $\Lip(X)=\Lip(X,A)\cap \c(X)$,
  where $\Lip(X)=\Lip(X,\C)$ is the classical complex Lipschitz algebra on $X$.
  The algebra $\Lip(X)$ satisfies all conditions in the Stone-Weierstrass Theorem
  and thus it is dense in $\c(X)$. Using \cite[Lemma 1]{Hausner}, we see that $\Lip(X,A)$
  is dense in $\c(X,A)$. Given $f\in\Lip(X,A)$, if $f(X)$ contains no
  singular element of $A$, then $\U/f\in\Lip(X,A)$. Therefore,
  by \cite[Theorem 2,6]{Abtahi-character-space}, the algebra $\Lip(X,A)$ is natural,
  that is $\mis(\Lip(X,A)) = X\times\mis(A)$.
\end{example}

\subsection{Notions of regularity}
There are many forms of regularity conditions that a Banach function algebra may satisfy.
Many of these conditions have important applications in several areas of functional analysis,
including automatic continuity and the theory of Wedderburn decompositions; see
\cite{Bade-Dales-1992}. Definitions, basic properties and examples concerning several notions
of regularity can be found in \cite[Chap.~4]{CBA}. See also \cite[Sec.~4.1]{Dales}.

A family $\cF$ of complex valued functions on $X$ is called regular if,
given a nonempty closed subset $E$ of $X$ and a point $x\in X\setminus E$, there exists $f\in \cF$
such that $f(x) \neq 0$ and $f|_E = 0$. This leads to the following definition.

\begin{dfn}\label{dfn:regularity}
  The algebra $A$ is called \emph{regular} if its Gelfand transform
  $\hat A$ is regular on $\mis(A)$; that is, given any closed subset $E$ of $\mis(A)$ and
  $\phi_0 \in \mis(A) \setminus E$, there exists $a \in A$ such that $\hat a(\phi_0)\neq 0$
  and $\hat a(\phi) = 0$ for all $\phi \in E$. The algebra $A$ is called \emph{normal}
  if for each proper, closed subset $E$ of $\mis(A)$ and each compact subset $F$
  of $X\setminus E$, there exists $a\in A$ with $\hat a|_F=1$, and $\hat a|_E=0$.
\end{dfn}

\begin{rem}
  Every regular Banach function algebra is normal (\cite[Corollary 4.2.9]{CBA}) and
  natural (Corollary \ref{cor:regular-BFA-are-natural} below).
\end{rem}

To each closed set $E$ in $\mis(A)$
are associated two distinguished ideals;
\begin{align*}
  I_E & = \set{a\in A:\hat a=0 \text{ on } E}, \\
  J_E & = \set{a \in A : \hat a=0 \text{ on a neighbourhood of } E}.
\end{align*}

We abbreviate $I_\phi = I_{\set\phi}$ and $J_\phi = J_{\set\phi}$.
We also write $I_x$ instead of $I_{\e_x}$ and $J_x$ instead of $J_{\e_x}$.

\begin{dfn}\label{dfn:st-regularity}
   The algebra $A$ is called \emph{strongly regular} at $\phi\in\mis(A)$ if $J_\phi$
   is dense in $I_\phi$. It is said that $A$ is strongly regular
   if it is strongly regular at each $\phi\in \mis(A)$.
\end{dfn}

A simple compactness argument shows that if $A$ is strongly regular on a compact set $K$
then it is regular on $K$. In particular, if $A$ is strongly regular then $A$ is regular.
The converse, however, is not true as the following example shows.

\begin{example}
  Let $X=[a,b]$, a compact interval of $\R$.
  For $n\in \N$, let $C^n(X)$ denote the space of $n$-times continuously differentiable
  functions $f:X\to\C$. Norm on $C^n(X)$ is defined by
  \begin{equation*}
    \|f\|_n = \sum_{k=0}^n \frac1{k!} \bignorm{f^{(k)}}_X \quad (f\in C^n(X)).
  \end{equation*}
  By \cite[Theorem 4.4.1]{Dales}, $C^n(X)$ is a natural regular Banach function algebra on $X$
  and, as a discussion after Definition 4.4.2 in \cite{Dales} indicates, it
  is not strongly regular.

\end{example}

The study of strong regularity for uniform algebras was initiated by Wilken \cite{Willken-1969}.
They proved that every strongly regular uniform algebra must be normal and natural.
An elementary proof of Wilken's result is given by Mortini \cite[Proposition 2.4]{Mortini},
which leads to the following stronger result;  see \cite[Theorem 2]{Fein-Somer-1999}.

\begin{thm}[Mortini]\label{thm:if-A-is-st-regular-on-K-and-I(K)=0}
  Let $A$ be a Banach function algebra on $X$, and let
  $K$ be a closed subset of $X$ such that $I_K$ is the zero ideal. If $A$
  is strongly regular on $K$, then $A$ is normal and
  $\mis(A) = X = K$.
\end{thm}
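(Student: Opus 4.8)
The plan is to reduce the whole statement to a single assertion: \emph{every character of $A$ has the form $\e_x$ for some $x\in K$}. Granting this, I would finish as follows. Since $A$ separates the points of $X$, the canonical map $x\mapsto\e_x$ embeds $X$ injectively into $\mis(A)$; because $K\subseteq X$, the assertion forces this embedding to carry $X$ into the image of $K$, so $X\subseteq K$ and hence $X=K=\mis(A)$, all three being identified as one compact space. In particular $A$ is then strongly regular at every point of $\mis(A)$, hence regular (a strongly regular Banach function algebra is regular, as noted above), hence normal by \cite[Corollary 4.2.9]{CBA}. So everything comes down to the displayed assertion.

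To prove it, I would fix $\phi\in\mis(A)$ and assume, for contradiction, that $\phi\neq\e_x$ for every $x\in K$. Then for each such $x$ the ideals $\ker\phi$ and $I_x=\ker\e_x$ are two \emph{distinct} maximal modular ideals, so their sum properly contains the maximal ideal $\ker\phi$ and therefore equals $A$; writing $\U=u_x+v_x$ with $u_x\in\ker\phi$ and $v_x\in I_x$ yields an element $v_x\in I_x$ with $\phi(v_x)=1$. Now I invoke strong regularity at $\e_x$, which is legitimate because $x\in K$: since $J_x$ is dense in $I_x$, there is $w_x\in J_x$ with $\norm{v_x-w_x}<\tfrac{1}{2}$, and as characters are contractive this gives $\abs{\phi(w_x)}>\tfrac{1}{2}$. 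By the definition of $J_x$, the Gelfand transform of $w_x$ vanishes on some open neighbourhood $U_x$ of $\e_x$ in $\mis(A)$.

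The finishing step is a finite product. The sets $\set{U_x:x\in K}$ form an open cover of the compact subset $\set{\e_x:x\in K}$ of $\mis(A)$, so finitely many of them, say $U_{x_1},\dots,U_{x_n}$, already cover it; I then put $w=w_{x_1}w_{x_2}\cdots w_{x_n}\in A$. At each point of $U_{x_1}\cup\dots\cup U_{x_n}$ at least one factor has vanishing Gelfand transform, so $\hat w$ vanishes there, in particular on $K$; thus $w\in I_K=\set{0}$ and $\phi(w)=0$. On the other hand $\abs{\phi(w)}=\prod_{i=1}^{n}\abs{\phi(w_{x_i})}>2^{-n}>0$, a contradiction. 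Hence $\phi=\e_x$ for some $x\in K$, which establishes the assertion.

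The heart of the argument---and the step I expect to be the only genuine obstacle---is precisely this product trick: strong regularity is exactly what lets one replace each $v_x$ by a function honestly vanishing near $\e_x$ without its value under $\phi$ collapsing to zero, and the hypothesis $I_K=\set{0}$ then turns the resulting finite localized cover of $K$ into a contradiction. Everything else is bookkeeping, chiefly that $\set{\e_x:x\in K}$ is compact in the Gelfand topology (so a finite subcover exists) and that characters are contractive (so the perturbation estimate survives composition with $\phi$).
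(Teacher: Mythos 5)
Your proof is correct: under the paper's conventions ($J_x=J_{\e_x}$, with neighbourhoods taken in $\mis(A)$), the localization $\phi(v_x)=1$, the perturbation into $J_x$ by strong regularity, the compactness of $\set{\e_x:x\in K}$, and the finite product forced to be $0$ by $I_K=\set{0}$ all go through, and the reduction of normality and of $\mis(A)=X=K$ to the claim that every character is some $\e_x$ with $x\in K$ is sound. The paper gives no proof of its own (it quotes Mortini \cite{Mortini} and Feinstein--Somerset \cite{Fein-Somer-1999}), and your argument is essentially that standard elementary one; it even adapts verbatim to the weaker hypothesis in which $J_x$ is defined via neighbourhoods of $x$ in $X$ rather than in $\mis(A)$, since the finite subcover can then be taken in $X$ itself.
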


For a survey of the work on strongly regular uniform algebras, see \cite{Fein-Somer-1999}.

\begin{dfn}\label{dfn:BRU}
  Let $A$ be a Banach function algebra on $\mis(A)$. It is said that $A$
  admits \emph{bounded relative units} at $\phi\in\mis(A)$ if there exists a constant $M_\phi > 0$
  such that, for each compact set $K$ in $\mis(A)\setminus\set\phi$, there exists $a\in J_\phi$
  with $\hat a|_K=1$ and $\|a\| \leq M_\phi$.
\end{dfn}

It may happen that $A$ does not have bounded relative units but it is strongly regular.
An example illustrating this situation is presented in \cite{Fein-1999}; see also
\cite[Example 4.1.46]{Dales}.

\begin{dfn}\label{dfn:Ditkin}
  Let $A$ be a Banach function algebra on $\mis(A)$. It is said that $A$ satisfies
  \emph{Ditkin's condition} at $\phi\in \mis(A)$ if $a\in \overline{a J}_\phi$, for all $a\in I_\phi$
  (i.e., given $\e>0$, there exists $b\in J_\phi$ such that $\|a-ab\|<\e$). The algebra $A$ is called
  a \emph{Ditkin algebra}, if it satisfies Ditkin's condition at every point $\phi\in \mis(A)$.
\end{dfn}

Clearly, if $A$ satisfies Ditkin's condition at $\phi\in\mis(A)$ then $A$ is strongly regular at $\phi$.

\begin{dfn}\label{dfn:st-Ditkin}
  Let $A$ be a Banach function algebra on $\mis(A)$. It is said that $A$ satisfies
  \emph{strong Ditkin's condition} at $\phi\in \mis(A)$ if $I_\phi$ has a bounded approximate
  identity contained in $J_\phi$; i.e., there is a bounded net $(u_\alpha)$ in $J_\phi$ such that
  $\|au_\alpha - a\|\to0$, for all $a\in I_\phi$. The algebra $A$ is called a
  \emph{strong Ditkin algebra} if it satisfies strong Ditkin's condition at every $\phi\in \mis(A)$.
\end{dfn}

By \cite[Proposition 3]{Fein-1995-note}, if $A$ is strongly regular at $\phi$ and
$I_\phi$ has a bounded approximate identity, then $A$ satisfies strong Ditkin condition
at $\phi$. 
In \cite{Bade-Open-problem}, Bade asked whether a strong Ditkin algebra admits
bounded relative units. This question is resolved positively in \cite[Theorem 4]{Fein-1995-note},
where it is shown that if a normal, unital Banach function algebra $A$ is strongly regular at
$\phi$, and $I_\phi$ has a bounded approximate identity, then $A$ has bounded
relative units at $\phi$ (see also \cite[Corollary 4.1.33]{Dales}).
Conversely, a Banach function algebra which is both strongly regular and admits bounded relative
units is a strong Ditkin algebra. Indeed, choose $a\in I_\phi$ and let $\e > 0$. Then, there exists
$b\in J_\phi$ such that $\|a-b\|<\e$, and there exists $u\in J_\phi$ such that
$\|u\| < M_\phi$ and $\hat u|_K=1$, where $K=\supp \hat b$,
the closed support of $\hat b$. It follows that $\hat b - \hat a \hat u = (\hat b - \hat a) \hat u$.
Since $A$ is semisimple, we get $b-au = (b-a)u$.
Hence,
\[
  \|a-au\| = \|a-b + (b-a)u\| < \e(1+M_\phi).
\]
By \cite[Corollary 2.9.15]{Dales}, $I_\phi$ has a bounded approximate identity.
Therefore, the class of strong Ditkin algebras is the intersection of
the classes of strongly regular algebras and of algebras with bounded relative units.

\medskip

The above discussion is summarized in the following statement.

\begin{prop}
  Let $A$ be a normal Banach function algebra on $\mis(A)$.
  Then, for every $\phi\in \mis(A)$, the following are equivalent;
  \begin{enumerate}[\upshape(a)]
    \item \label{item:st-Ditkin}
      $A$ satisfies strong Ditkin's condition at $\phi$,

    \item \label{item:has-bai}
      $A$ is strongly regular at $\phi$ and $I_\phi$ has a bounded approximate identity,

    \item \label{item:admits-bru}
      $A$ is strongly regular at $\phi$ and admits bounded relative units at that point.
  \end{enumerate}
\end{prop}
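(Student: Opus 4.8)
The plan is to prove the chain of implications $(\ref{item:st-Ditkin})\Rightarrow(\ref{item:has-bai})\Rightarrow(\ref{item:admits-bru})\Rightarrow(\ref{item:st-Ditkin})$, since most of the ingredients are already assembled in the discussion preceding the statement; the proposition is essentially a bookkeeping summary. The implication $(\ref{item:st-Ditkin})\Rightarrow(\ref{item:has-bai})$ is immediate: if $A$ satisfies strong Ditkin's condition at $\phi$, then by definition $I_\phi$ has a bounded approximate identity contained in $J_\phi$, so in particular $I_\phi$ has a bounded approximate identity; and since $J_\phi\subseteq I_\phi$ and every $a\in I_\phi$ is approximated in norm by elements $au_\alpha$ with $u_\alpha\in J_\phi$, we have $a\in\overline{J_\phi}$, which gives strong regularity at $\phi$ (this is exactly the remark that Ditkin-type conditions imply strong regularity).

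For $(\ref{item:has-bai})\Rightarrow(\ref{item:admits-bru})$, I would invoke \cite[Theorem 4]{Fein-1995-note} (equivalently \cite[Corollary 4.1.33]{Dales}) verbatim: since $A$ is normal and unital, strongly regular at $\phi$, and $I_\phi$ has a bounded approximate identity, it follows that $A$ has bounded relative units at $\phi$. This is where the hypothesis that $A$ is normal is actually consumed, so it is worth flagging that the normality assumption in the proposition is precisely what licenses this citation. No new argument is needed here beyond quoting the result.

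The implication $(\ref{item:admits-bru})\Rightarrow(\ref{item:st-Ditkin})$ is the one to write out, and it reproduces the computation given just above the proposition. Fix $\phi$, let $a\in I_\phi$ and $\e>0$. Strong regularity at $\phi$ yields $b\in J_\phi$ with $\norm{a-b}<\e$. Put $K=\supp\hat b$, a compact subset of $\mis(A)\setminus\set\phi$; bounded relative units at $\phi$ give $u\in J_\phi$ with $\hat u|_K=1$ and $\norm{u}\le M_\phi$. Then $\hat b-\hat a\hat u=(\hat b-\hat a)\hat u$ on $\mis(A)$ (they agree off $K$ since $\hat a\hat u=\hat a$ there modulo $\hat b=0$... more cleanly: $\hat b\hat u=\hat b$ because $\hat u=1$ on $\supp\hat b$, so $\hat b-\hat a\hat u=\hat b\hat u-\hat a\hat u=(\hat b-\hat a)\hat u$), and semisimplicity of $A$ upgrades this to $b-au=(b-a)u$ in $A$. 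Hence
\[
  \norm{a-au}\le\norm{a-b}+\norm{(b-a)u}<\e(1+M_\phi).
\]
Since $u\in J_\phi$ and $\e$ is arbitrary, $a\in\overline{aJ_\phi}$; moreover the bound shows the relevant net is bounded, so by \cite[Corollary 2.9.15]{Dales} $I_\phi$ has a bounded approximate identity, and tracking that it can be taken in $J_\phi$ gives strong Ditkin's condition at $\phi$. The only mild subtlety — the "main obstacle", such as it is — is making sure the approximate identity produced lies in $J_\phi$ rather than merely in $I_\phi$; this is handled by the standard construction in \cite[Corollary 2.9.15]{Dales} applied to the net $(u)$ built above, whose members already lie in $J_\phi$.
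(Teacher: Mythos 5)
Your proposal is correct and takes essentially the same route as the paper, which presents the proposition as a summary of the preceding discussion containing exactly your three ingredients: (a)$\Rightarrow$(b) from the definition, (b)$\Rightarrow$(c) by quoting \cite[Theorem 4]{Fein-1995-note} (where normality is consumed), and (c)$\Rightarrow$(a) via the $b-au=(b-a)u$ computation with $\hat u|_{\supp\hat b}=1$ and \cite[Corollary 2.9.15]{Dales}. The only negligible difference is in securing that the bounded approximate identity lies in $J_\phi$: the paper handles this implicitly through \cite[Proposition 3]{Fein-1995-note} (strong regularity plus a bounded approximate identity for $I_\phi$ gives strong Ditkin at $\phi$), while you track it through the construction in Dales, noting that the algebraic combinations of the relative units stay in the ideal $J_\phi$; both are fine.
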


The story for uniform algebras is a little different \cite{Fein-Somer-1999}. If $\fA$ is a uniform algebra,
then $\fA$ has bounded relative units at $\phi$ if, and only if, $\fA$ satisfies strong Ditkin's condition
at $\phi$. In \cite{Fein-1992} an example of a non-trivial, normal uniform algebra $\fA$ is given which
admits bounded relative units.

To complete the discussion, we give the following result. Although proofs may be found in the literature,
we present one for the reader's convenience.

\begin{thm}\label{thm:if-T:A-to-B-is-a-homo}
  Suppose that $A$ and $B$ are commutative unital Banach algebras and that
  $T: A\to B$ is a continuous homomorphism with $\overline{T(A)}=B$.
  Then
  \begin{enumerate}[\upshape\quad(a)]
    \item \label{item:regular}
       if $A$ is (strongly) regular, so is $B$,
    \item \label{item:bru}
       if $A$ admits bounded relative units, so does $B$,
    \item \label{item:St.Ditkin}
       if $A$ satisfies strong Ditkin's condition, so does $B$,
    \item \label{item:Ditkin}
       if $T(A)=B$ and $A$ satisfies Ditkin's condition, so does $B$.
  \end{enumerate}
\end{thm}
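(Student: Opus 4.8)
The plan is to transport the defining data of each regularity condition forward along $T$, and to pass between the two character spaces via the dual map $\Theta\colon\mis(B)\to\mis(A)$, $\Theta(\psi)=\psi\circ T$. First I would record the structural facts that do all the work. Because $\overline{T(A)}=B$ and $B$ is unital, $T(\U)$ is an identity on the dense subalgebra $T(A)$, hence on $B$, so $T(\U)=\U$ and $\Theta$ is well defined; $\Theta$ is continuous for the Gelfand topologies and, $T(A)$ being dense, injective, so by compactness of $\mis(B)$ and Hausdorffness of $\mis(A)$ it is a homeomorphism onto a closed subset of $\mis(A)$. The identity $\widehat{T(a)}=\hat a\circ\Theta$ then does everything: with $\phi_0:=\Theta(\psi_0)$ it gives $T(I_{\phi_0})\subseteq I_{\psi_0}$ and $T(J_{\phi_0})\subseteq J_{\psi_0}$, and, replacing an approximant $a$ by $a-\hat a(\phi_0)\U$ (using $T(\U)=\U$ and $|\hat a(\phi_0)|=|\psi_0(T(a))|\le\|T(a)-b\|$ for $b\in I_{\psi_0}$), it shows that $T(I_{\phi_0})$ is dense in $I_{\psi_0}$.

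For \ref{item:regular}, in the regular case I would take a closed set $E\subseteq\mis(B)$ with $\psi_0\notin E$, note that $\Theta(E)$ is compact, hence closed, in $\mis(A)$ and avoids $\Theta(\psi_0)$, choose $a\in A$ with $\hat a(\Theta(\psi_0))\neq0$ that vanishes on $\Theta(E)$, and push it forward to $T(a)$; in the strongly regular case, density of $J_{\phi_0}$ in $I_{\phi_0}$ together with the inclusions above gives $I_{\psi_0}=\overline{T(I_{\phi_0})}\subseteq\overline{T(J_{\phi_0})}\subseteq\overline{J_{\psi_0}}\subseteq I_{\psi_0}$, i.e.\ $\overline{J_{\psi_0}}=I_{\psi_0}$. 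For \ref{item:bru}, given a compact set $L\subseteq\mis(B)\setminus\{\psi_0\}$ I would transfer it to the compact set $\Theta(L)\subseteq\mis(A)\setminus\{\phi_0\}$, take $a\in J_{\phi_0}$ with $\hat a|_{\Theta(L)}=1$ and $\|a\|\le M_{\phi_0}$, and note that $T(a)\in J_{\psi_0}$ is a relative unit over $L$ with $\|T(a)\|\le\|T\|M_{\phi_0}$; so $M_{\psi_0}:=\|T\|M_{\phi_0}$ works. For \ref{item:St.Ditkin}, if $(u_\alpha)$ is a bounded approximate identity of $I_{\phi_0}$ contained in $J_{\phi_0}$, then $(T(u_\alpha))$ is a bounded net in $J_{\psi_0}$, and from $\|T(a)T(u_\alpha)-T(a)\|\le\|T\|\,\|au_\alpha-a\|$ it is an approximate identity on the dense subspace $T(I_{\phi_0})$ of $I_{\psi_0}$; a routine $\e/3$ estimate using its uniform bound extends this to all of $I_{\psi_0}$, so $B$ satisfies strong Ditkin's condition at $\psi_0$.

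Finally, for \ref{item:Ditkin}: since $T$ is now onto, every $b\in I_{\psi_0}$ can be written $b=T(a)$ with $a\in A$, and $\hat a(\phi_0)=\hat b(\psi_0)=0$ forces $a\in I_{\phi_0}$; applying Ditkin's condition in $A$ to $a$ and pushing the resulting $d\in J_{\phi_0}$ forward to $T(d)\in J_{\psi_0}$ gives $\|b-bT(d)\|=\|T(a-ad)\|\le\|T\|\,\|a-ad\|$, which is as small as we please. The thing to watch — and the reason \ref{item:Ditkin} requires $T(A)=B$ and not merely $\overline{T(A)}=B$ — is that Ditkin's condition, unlike strong Ditkin's, supplies no uniformly bounded net, so the $\e/3$ upgrade used for \ref{item:St.Ditkin} is unavailable and surjectivity is genuinely needed in order to realize $b$ exactly as the image of an element of $I_{\phi_0}$. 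That asymmetry, together with the density of $T(I_{\phi_0})$ in $I_{\psi_0}$ established in the first paragraph, is the only part of the argument that is not pure bookkeeping; everything else is a direct transfer along $T$ and $\Theta$.
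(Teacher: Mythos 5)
Your proposal is correct and follows essentially the same route as the paper: the dual map $\psi\mapsto\psi\circ T$, the transfer of the ideals $I$ and $J$ along it, the shift $a\mapsto a-\hat a(\phi_0)\U$ to show images of $I_{\phi_0}$ are dense in $I_{\psi_0}$, pushing forward relative units and bounded approximate identities, and using surjectivity only for Ditkin's condition. The only differences are presentational (your closure-inclusion chain for strong regularity and the dense-subspace formulation for strong Ditkin replace the paper's explicit $\epsilon$-estimates), not a different argument.
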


\begin{proof}
  Before going to detailed proofs for each statement, notice that the condition $\overline{T(A)}=B$
  implies that $\psi\circ T\in \mis(A)$, for every $\psi\in\mis(B)$. The homomorphism $T:A\to B$,
  therefore, induces a mapping $\hat T:\mis(B)\to\mis(A)$, defined by $\hat T(\psi) = \psi \circ T$.
  Indeed, $\hat T(\psi)(a)= \widehat{T(a)}(\psi)$, for all $a\in A, \psi\in\mis(B)$. It is easily seen that
  $\hat T$ is a continuous injection, so that $\mis(B)$ is homeomorphic to a compact subset of $\mis(A)$,
  and the following statements hold;
  \begin{enumerate}[(1)]
    \item \label{item:E-is-closed-iff}
          a set $E$ in $\mis(B)$ is closed if, and only if, its image
          $\hat T(E)$ in $\mis(A)$ is closed,

    \item \label{item:T(a)-in-I(E)-iff}
          $T(a)\in I_E$ if and only if $a \in I_{\hat T(E)}$,
          where $a\in A$ and $E\subset \mis(B)$,

    \item \label{item:T(a)-in-J(psi)-iff}
          $T(a) \in J_\psi$ if and only if $a \in J_{\hat T(\psi)}$, where
          $a\in A$ and $\psi\in\mis(B)$.
  \end{enumerate}

  \medskip\noindent
  We now present a detailed discussion of theorem statements.

  \eqref{item:regular} It is clear from \eqref{item:E-is-closed-iff}--\eqref{item:T(a)-in-J(psi)-iff}
  that if $A$ is regular then so is $B$.
  Assume that $A$ is strongly regular. Take $\psi_0\in\mis(B)$ and $b_0\in I_{\psi_0}$.
  Let $L > 1 + 2\|T\|$. Given $\e>0$, since $\overline{T(A)}=B$, there exists $a_1\in A$
  such that $\|b_0-T(a_1)\|<\e/L$. Let $\phi_0=\hat T(\psi_0)$ and
  $a_0=a_1-\phi_0(a_1)$. Then $a_0 \in I_{\phi_0}$ and
  \begin{align*}
    \|a_1-a_0\|
      & = |\phi_0(a_1)| = |\psi_0(T(a_1))| \\
      & = |\psi_0(T(a_1) -b_0)| \leq \|T(a_1)-b_0\| < \e / L.
  \end{align*}
  Therefore,
  \begin{equation} \label{eqn:b-0-approximation}
  \begin{split}
    \|b_0-T(a_0)\|
        & \leq \|b_0-T(a_1)\| + \|T(a_1-a_0)\| \\
        & \leq \e/L + \|T\|\|a_1-a_0\|
        \leq \e (1 + \|T\|)/L.
  \end{split}
  \end{equation}

  Since $A$ is strongly regular at $\phi_0$ and $a_0\in I_{\phi_0}$,
  there is $a\in J_{\phi_0}$ such that $\|a_0-a\|<\e/L$.
  Take $b=T(a)$. By \eqref{item:T(a)-in-J(psi)-iff}, $b\in J_{\psi_0}$ and
  \begin{align*}
    \|b_0-b\|
       & = \|b_0-T(a)\|  \\
       & \leq \|b_0-T(a_0)\| + \|T(a_0-a)\| \\
       & \leq \|b_0-T(a_0)\| + \|T\|\|a_0-a\| \\
       & \leq \e(1+\|T\|)/L + \e\|T\|/L \\
       & = \e(1 + 2\|T\|)/L < \e.
  \end{align*}
  Therefore, $B$ is strongly regular at $\psi_0$.

  \eqref{item:bru} It follows from \eqref{item:E-is-closed-iff}--\eqref{item:T(a)-in-J(psi)-iff}.

  \eqref{item:St.Ditkin}
  Let $\psi_0\in\mis(B)$ and $\phi_0=\hat T(\psi_0)$. Since $A$ is a strong Ditkin algebra,
  $I_{\phi_0}$ has a bounded approximate identity $(u_\alpha)$, say, contained in $J_{\phi_0}$.
  Let $v_\alpha=T(u_\alpha)$, for all $\alpha$. Then $(v_\alpha)$ is a bounded net, contained
  in $J_{\psi_0}$ by (3). Given $b_0\in I_{\psi_0}$, we show that $b_0v_\alpha\to b_0$.
  For $\e>0$, as in \eqref{eqn:b-0-approximation}, there is $a_0\in I_{\phi_0}$ such that
  $\|b_0-T(a_0)\|<\e$. Hence,
  \begin{multline*}
    \|b_0-b_0v_\alpha\|
      \leq \|b_0-T(a_0)\|+\|T(a_0 - a_0 u_\alpha)\| \\
       + \|\bigl(T(a_0)-b_0\bigr)T(u_\alpha)\|
      < \e + \|T\| \|a_0-a_0u_\alpha\| + \e M,
  \end{multline*}
  where $M=\sup_\alpha \|T(u_\alpha)\|$. Since $\|a_0-a_0u_\alpha\| \to 0$, we get
  \begin{equation*}
    \limsup_\alpha \|b_0-b_0v_\alpha\| \leq \e(1+M).
  \end{equation*}
  This holds for arbitrary $\e>0$, so $\lim_\alpha \|b_0-b_0v_\alpha\| =0$.

  \eqref{item:Ditkin}
  For $\psi_0\in\mis(B)$, let $b_0\in I_{\psi_0}$ and $\phi_0=\hat T(\psi_0)$.
  Since $T(A)=B$, there is $a_0\in I_{\phi_0}$ such that $T(a_0)=b_0$.
  The algebra $A$ satisfies Ditkin's condition at $\phi_0$, so there exists, for every $\e>0$,
  an element $u_0\in J_{\phi_0}$ such that $\|a_0-a_0u_0\|<\e/\|T\|$.
  Take $v_0=T(u_0)$. Then $v_0\in J_{\psi_0}$ and
    \begin{equation*}
      \|b_0-b_0v_0\|= \|T(a_0)-T(a_0)T(u_0)\| \leq \|T\| \|a_0-a_0u_0\| < \e.
    \end{equation*}
  This means that $B$ satisfies Ditkin's condition at $\psi_0$.
\end{proof}

We conclude the section with a result on vector-valued function algebras.

\begin{thm}\label{thm:if-AA-has-P-then-fA-and-A-have-P}
  Let $\A$ be an admissible Banach $A$-valued function algebra on $X$,
  with $\fA=\c(X)\cap \A$.
  \begin{enumerate}[\upshape(a)]
    \item \label{item:(st)reg}
      If $\A$ is (strongly) regular, so are $\fA$ and $A$.
    \item \label{item:BRU}
      If $\A$ admits bounded relative units, so do $\fA$ and $A$.
    \item \label{item:(st)Ditkin}
      If $\A$ satisfies (strong) Ditkin's condition, so do $\fA$ and $A$.
  \end{enumerate}
\end{thm}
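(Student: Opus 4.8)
The plan is to exhibit continuous surjective (indeed, onto-the-nose) homomorphisms from $\A$ to $\fA$ and from $\A$ to $A$, and then invoke Theorem \ref{thm:if-T:A-to-B-is-a-homo}. The homomorphism onto $A$ is evaluation at a point: fix $x_0 \in X$ and set $T_A : \A \to A$, $f \mapsto f(x_0)$. This is a continuous algebra homomorphism (since $\norm{f(x_0)} \le \norm{f}_X \le C\norm{f}$), and it is surjective because $\A$ contains all constant functions. The homomorphism onto $\fA$ comes from composition with a fixed character $\phi_0 \in \mis(A)$: by admissibility, $T_{\fA} : \A \to \fA$, $f \mapsto \phi_0 \circ f$, lands in $\fA = \A \cap \c(X)$, and again $\fA = \set{\phi \circ f : f \in \A}$ from the discussion after the definition of admissibility, so $T_{\fA}$ is onto. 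Continuity of $T_\fA$ will follow because $\norm{\phi_0 \circ f}_X \le \norm{f}_X \le C\norm{f}$ together with the fact that $\fA$, being a Banach function algebra in its own right, has norm comparable to the uniform norm on the relevant character space — or, more carefully, one uses that $\fA$ is a closed subalgebra of $\A$ and $T_\fA$ restricted to $\fA$ is the identity, so by the closed graph theorem $T_\fA$ is bounded as a map into $(\fA, \enorm)$.

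With these two maps in hand, parts \eqref{item:(st)reg} and \eqref{item:BRU} are immediate: apply Theorem \ref{thm:if-T:A-to-B-is-a-homo}\eqref{item:regular} and \eqref{item:bru} to $T_A$ and to $T_\fA$ respectively, using that in each case the image is the whole target (so the closure condition $\overline{T(\A)} = B$ holds trivially). For the strong Ditkin part of \eqref{item:(st)Ditkin}, apply Theorem \ref{thm:if-T:A-to-B-is-a-homo}\eqref{item:St.Ditkin}; for the plain Ditkin part, apply \eqref{item:Ditkin}, which requires $T(\A) = B$ exactly — and we do have genuine surjectivity, not merely dense range, for both $T_A$ and $T_\fA$. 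So every clause of the theorem reduces to a clause of Theorem \ref{thm:if-T:A-to-B-is-a-homo}.

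The main point requiring care — the ``hard part,'' though it is more a matter of bookkeeping than of real difficulty — is verifying that $T_A$ and $T_\fA$ are bounded with respect to the given complete norm $\enorm$ on $\A$, since the definition only stipulates that $\enorm$ is \emph{some} complete algebra norm, not that it dominates $\enorm_X$. For $T_A$ this is genuine: one notes that $\e_{x_0} \dm \phi$ is a character of $\A$ for every $\phi \in \mis(A)$, hence $\norm{\widehat{f(x_0)}}_{\mis(A)} = \sup_\phi \abs{\e_{x_0}\dm\phi(f)} \le \norm{f}$, and since $A$ is semisimple its Gelfand norm is equivalent to $\enorm$ on $A$, giving $\norm{f(x_0)} \le C\norm{f}$. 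For $T_\fA$ one argues similarly using the characters $\e_x \dm \phi_0$ to bound $\norm{\phi_0 \circ f}_X$ by $\norm{f}$, and then uses that $\fA$ carries the relative norm from $\A$ (as a closed subalgebra) under which $\enorm_X \le \enorm$ on $\fA$, or equivalently that $(\fA, \enorm)$ is itself a Banach function algebra. I would state this boundedness as a short preliminary lemma (or simply fold it into the opening lines of the proof) and then let the three parts follow in one line each from the previous theorem. I would also remark that, specialized to $\A = \fA \htp_\gamma A$, this recovers the ``easy direction'' of \eqref{P} for tensor products.
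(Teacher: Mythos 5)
Your overall route is exactly the paper's: the paper also takes the evaluation homomorphism $\cE_{x}:\A\to A$, $f\mapsto f(x)$, and the composition homomorphism $\Phi:\A\to\fA$, $f\mapsto\phi\circ f$, observes that these are continuous epimorphisms, and then applies Theorem \ref{thm:if-T:A-to-B-is-a-homo} clause by clause (the paper does not even spell out the continuity, which you rightly identify as the only point needing care, and genuine surjectivity is indeed what makes clause \eqref{item:Ditkin} applicable).

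However, one step in your continuity argument for $T_A=\cE_{x_0}$ is wrong as stated: you claim that ``since $A$ is semisimple its Gelfand norm is equivalent to $\enorm$ on $A$.'' Semisimplicity only means the Gelfand transform is injective; it does not make the Gelfand (sup) norm equivalent to the given norm. Standard counterexamples are $C^{n}(X)$ (used in the paper itself as an example), $\Lip(X)$, or $\ell^1(\mathbb{Z})$ under convolution: all semisimple, all with Gelfand norm strictly weaker than the Banach-algebra norm. Equivalence would hold only if $A$ were (isomorphic to) a uniform algebra, which is not assumed. The conclusion you want is nevertheless true and easy to repair: either invoke the classical automatic-continuity theorem that every homomorphism from a Banach algebra into a commutative semisimple Banach algebra is continuous, or run the same closed-graph argument you already use for $T_\fA$: if $f_n\to 0$ in $\A$ and $f_n(x_0)\to a$ in $A$, then for every $\phi\in\mis(A)$ the character $\e_{x_0}\dm\phi$ of $\A$ is continuous, so $\phi(a)=\lim_n\phi\bigl(f_n(x_0)\bigr)=0$, and semisimplicity of $A$ gives $a=0$. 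A related small point: your closed-graph argument for $T_\fA$ tacitly uses that $\fA$ is complete (e.g.\ closed in $\A$ under the given norm); this is true, but it is itself a consequence of the continuity of the evaluations $\cE_x$ together with the closedness of $\C\U$ in $A$, so it should be recorded in that order rather than assumed outright. With these repairs the proof is correct and coincides with the paper's.
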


\begin{proof}
  Take a point $x\in X$ and a character $\phi\in \mis(A)$. Consider the following mappings;
  \begin{equation*}
    \left\{\!\!
      \begin{array}{l}
        \cE_{x}:\A\to A, \\
        f\mapsto f(x),
      \end{array}
    \right.
   \qquad
   \left\{\!\!
     \begin{array}{ll}
        \Phi:\A\to \fA, \\
        f \mapsto \phi\circ f.
     \end{array}
   \right.
  \end{equation*}
  Then $\cE_{x}$ and $\Phi$ are continuous epimorphisms.
  Now, apply Theorem \ref{thm:if-T:A-to-B-is-a-homo}.
\end{proof}

\section{Regularity conditions for vector-valued function algebras}
\label{sec:main}

Throughout this section, we let $\A$ be an admissible Banach $A$-valued function
algebra on $X$ with $\mis(\A)=\mis(\fA) \times \mis(A)$, where $\fA=\c(X)\cap \A$.
By Theorem \ref{thm:if-AA-has-P-then-fA-and-A-have-P}, if $\A$ satisfies a regularity condition
$\p$ then $\fA$ and $A$ satisfy $\p$. We are concerned with the question that whether $\A$ receives
$\p$ from $\fA$ and $A$.

\subsection{Regularity}
Let us commence with the following observation.


\begin{prop}\label{prop:if-A-is-reg-then-mis(A)=Xxmis(A)}
  If $\A$ is regular then $\A$ is natural, i.e., $\mis(\A) = X \times \mis(A)$.
\end{prop}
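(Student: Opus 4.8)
The plan is to deduce everything from the single fact that $\mis(\fA)=X$, in conjunction with the standing hypothesis $\mis(\A)=\mis(\fA)\times\mis(A)$ that is in force throughout the section. Under that hypothesis every $\tau\in\mis(\A)$ has the form $\tau=\psi\dm\phi$ with $\psi\in\mis(\fA)$ and $\phi\in\mis(A)$; so once we know $\mis(\fA)=X$, the character $\psi$ is necessarily an evaluation $\e_x$ and $\tau=\e_x\dm\phi$, which is exactly the assertion that $\A$ is natural. Thus the whole task reduces to showing $\mis(\fA)=X$.

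First I would apply Theorem~\ref{thm:if-AA-has-P-then-fA-and-A-have-P}\eqref{item:(st)reg}: since $\A$ is regular, the complex Banach function algebra $\fA=\c(X)\cap\A$ is regular on its character space. Now recall that $X$ sits inside $\mis(\fA)$ via $x\mapsto\e_x$ as a compact subset, hence a \emph{closed} subset (because $\mis(\fA)$ is Hausdorff), and that under this identification $\hat f|_X=f$ for every $f\in\fA$. If there were some $\psi\in\mis(\fA)\setminus X$, then regularity of $\fA$, applied to the closed set $X$ and the point $\psi$, would furnish $f\in\fA$ with $\hat f(\psi)\neq0$ and $\hat f|_X=0$; but $\hat f|_X=f$ as an element of $\c(X)$, so $f=0$ in $\fA$, whence $\hat f(\psi)=0$, a contradiction. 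Therefore $\mis(\fA)=X$. (Equivalently, one may invoke the fact that a regular Banach function algebra is natural, recorded in Corollary~\ref{cor:regular-BFA-are-natural}.)

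Combining $\mis(\fA)=X$ with the standing hypothesis yields $\mis(\A)=\mis(\fA)\times\mis(A)=X\times\mis(A)$ and shows that every $\tau\in\mis(\A)$ is of the form $\e_x\dm\phi$; hence $\A$ is natural. There is no serious obstacle in this argument. The only points that deserve care are that $X$ must be recognised as a \emph{closed} subset of $\mis(\fA)$ before the regularity hypothesis can be fed it, and that "vanishing on $X$" for an element of $\fA$ genuinely forces it to be the zero element of $\fA$, which holds automatically because $\fA\subseteq\c(X)$.
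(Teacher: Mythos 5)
Your proof is correct, but it takes a genuinely different route from the paper's. The paper never passes to $\fA$: it views $\A$ itself as a complex Banach function algebra on $\mis(\A)$, observes that $X\times\mis(A)$ is closed there, and applies regularity of $\A$ to a hypothetical $\tau_0\in\mis(\A)\setminus X\times\mis(A)$ to get $f\in\A$ with $\tau_0(f)\neq0$ and $\hat f=0$ on $X\times\mis(A)$; then $\phi(f(x))=0$ for all $x\in X$, $\phi\in\mis(A)$, and semisimplicity of $A$ forces $f=0$, a contradiction. In particular the paper's argument does not consume the standing hypothesis $\mis(\A)=\mis(\fA)\times\mis(A)$ at all, and it yields Corollary~\ref{cor:regular-BFA-are-natural} as the special case $A=\C$. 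You instead push regularity down to $\fA$ via Theorem~\ref{thm:if-AA-has-P-then-fA-and-A-have-P}\eqref{item:(st)reg}, prove directly that $\mis(\fA)=X$, and then upgrade each $\psi\dm\phi$ to $\e_x\dm\phi$ using the section-wide identification $\mis(\A)=\mis(\fA)\times\mis(A)$. This is legitimate, since that identification is indeed a blanket assumption of Section~\ref{sec:main}, but it proves a slightly weaker statement: your conclusion leans on that extra hypothesis, whereas the paper's proof derives naturality from regularity of $\A$ alone (plus semisimplicity of $A$), which is what makes Corollary~\ref{cor:regular-BFA-are-natural} a genuine corollary rather than an input. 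Relatedly, your parenthetical suggestion to invoke Corollary~\ref{cor:regular-BFA-are-natural} would be circular in the paper's ordering, since that corollary is deduced from this very proposition; your self-contained argument for $\mis(\fA)=X$ avoids the circle, so keep that argument and drop the citation.
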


\begin{proof}
 Suppose, towards a contradiction, that $\mis(\A)\neq X\times\mis(A)$, and
 take a character $\tau_0$ in $\mis(\A)\setminus X\times\mis(A)$. Since $\A$ is regular
 and $X\times\mis(A)$ is closed in $\mis(\A)$, there exists a function $f \in \A$
 such that $\tau_0(f)\neq 0$ and $\hat f= 0$ on $X\times\mis(A)$. This means that
 $\phi(f(x))=0$, for all $x\in X$ and $\phi\in \mis(A)$. Since $A$ is assumed to be
 semisimple, we get $f(x)=0$, for all $x\in X$. This means that $f=0$, which is a contradiction.
 Therefore, $\mis(\A)= X\times\mis(A)$.
\end{proof}

In case $A=\mathbb{C}$, we get the following result for complex function algebras.

\begin{cor}\label{cor:regular-BFA-are-natural}
  Let $\fA$ be a Banach function algebra on $X$. If\/ $\fA$ is regular,
  then $\fA$ is natural; that is $\mis(\fA)=X$.
\end{cor}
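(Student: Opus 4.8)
The plan is to read off this corollary as the special case $A=\C$ of Proposition \ref{prop:if-A-is-reg-then-mis(A)=Xxmis(A)}. First I would record that the hypotheses of that proposition are met in this setting. Taking $A=\C$, the space $\c(X,\C)$ coincides with $\c(X)$, so a Banach function algebra $\fA$ on $X$ in the sense of Section \ref{sec:pre} is precisely a Banach $\C$-valued function algebra on $X$; the coefficient algebra $\C$ is (trivially) semisimple, and $\fA$ is automatically admissible because $\mis(\C)$ consists of the single identity character, whence $\phi\circ f=f\in\fA$ for every $f\in\fA$ and every $\phi\in\mis(\C)$. Under this identification the associated scalar subalgebra is $\c(X)\cap\fA=\fA$ itself.

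Next, since $\mis(\C)$ is a one-point space, $X\times\mis(\C)$ is canonically homeomorphic to $X$. Applying Proposition \ref{prop:if-A-is-reg-then-mis(A)=Xxmis(A)} to the regular algebra $\fA$, viewed as a $\C$-valued function algebra, then yields $\mis(\fA)=X\times\mis(\C)=X$, which is exactly the assertion that $\fA$ is natural. Alternatively, one can transcribe the proof of the proposition directly: if $\mis(\fA)\neq X$, choose $\tau_0\in\mis(\fA)\setminus X$; since $\fA$ is regular and $X$ (identified via $x\mapsto\e_x$ with a compact, hence closed, subset of $\mis(\fA)$) is closed in $\mis(\fA)$, there is $f\in\fA$ with $\tau_0(f)\neq0$ and $f|_X=0$, i.e.\ $f=0$ in $\c(X)$, contradicting $\tau_0(f)\neq0$.

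I do not expect any genuine obstacle here; the argument is a straightforward specialization. The only point deserving a word of care is the routine check that the standing hypotheses of Proposition \ref{prop:if-A-is-reg-then-mis(A)=Xxmis(A)} (admissibility, semisimplicity of the coefficient algebra, and the identification $\c(X,\C)=\c(X)$ together with $\fA=\c(X)\cap\fA$) really do hold when the coefficient algebra is $\C$, so that the corollary is an honest instance of the proposition rather than a separate claim.
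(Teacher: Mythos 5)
Your proposal is correct and is exactly what the paper intends: the corollary is stated as the special case $A=\C$ of Proposition \ref{prop:if-A-is-reg-then-mis(A)=Xxmis(A)}, and your verification that a Banach function algebra is an admissible Banach $\C$-valued function algebra (with $X\times\mis(\C)\cong X$), together with your direct transcription of the proposition's contradiction argument, matches the paper's route.
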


\begin{thm}\label{thm:regularity-for-vv-FA}
  For the $A$-valued function algebra $\A$, suppose that $\fA$ and $A$ are regular.
  Then $\A$ is regular.
\end{thm}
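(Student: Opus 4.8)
The plan is to reduce regularity of $\A$ to that of $\fA$ and of $A$ by a ``slice-by-slice'' construction, carried out inside the identification $\mis(\A) = X \times \mis(A)$. First note that since $\fA$ is regular, Corollary~\ref{cor:regular-BFA-are-natural} shows $\fA$ is natural, so $\mis(\fA) = X$, and hence, by the standing hypothesis of this section, $\mis(\A) = X \times \mis(A)$, the characters being the maps $\e_x \dm \phi$ with $\widehat f(x,\phi) = \phi(f(x))$. Thus I must show: given a closed set $E \subseteq X \times \mis(A)$ and a point $(x_0,\phi_0) \in (X\times\mis(A)) \setminus E$, there is $f \in \A$ with $\widehat f = 0$ on $E$ and $\widehat f(x_0,\phi_0) \neq 0$. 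The tool I will use is the \emph{neighbourhood form} of regularity: a regular Banach function algebra $B$ has, for every closed $F \subseteq \mis(B)$ and every $z \notin F$, an element $b$ with $\widehat b(z) = 1$ and $\widehat b$ vanishing on an open neighbourhood of $F$ (equivalently $J_F \not\subseteq I_z$); this is classical, see \cite[Chap.~4]{CBA}.

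The construction has two steps. Put $F_1 = \{\phi \in \mis(A) : (x_0,\phi) \in E\}$, which is the image of the compact set $E \cap (\{x_0\}\times\mis(A))$ under the projection to $\mis(A)$, hence closed, and $\phi_0 \notin F_1$. Applying the neighbourhood form of regularity to $A$ yields $b \in A$ with $\widehat b(\phi_0) = 1$ and an open set $V \supseteq F_1$ with $\widehat b|_V = 0$; viewing $b$ as a constant function in $\A$, one has $\widehat b(x,\phi) = \phi(b)$, so $\widehat b$ vanishes on $X \times V$. The ``leftover'' set $E \setminus (X\times V) = E \cap (X \times (\mis(A)\setminus V))$ is closed, hence compact, and its projection $F_2$ to $X$ is closed; moreover $x_0 \notin F_2$, since any point of $E$ with first coordinate $x_0$ has second coordinate in $F_1 \subseteq V$. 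Applying the neighbourhood form of regularity to $\fA$ on $X$ gives $g \in \fA$ with $g(x_0) = 1$ and an open set $W \supseteq F_2$ with $g|_W = 0$. Finally set $f = g\,b \in \A$, i.e.\ the function $x \mapsto g(x)b$; then $\widehat f(x,\phi) = g(x)\,\widehat b(\phi)$.

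The verification is routine. If $(y,\psi) \in E$, then either $\psi \in V$, whence the factor $\widehat b(\psi) = 0$ gives $\widehat f(y,\psi) = 0$, or $\psi \notin V$, whence $(y,\psi) \in E \setminus (X\times V)$, so $y \in F_2 \subseteq W$ and the factor $g(y) = 0$ gives $\widehat f(y,\psi) = 0$; thus $\widehat f|_E = 0$. On the other hand $\widehat f(x_0,\phi_0) = g(x_0)\widehat b(\phi_0) = 1 \neq 0$. Hence $\widehat\A$ is regular on $\mis(\A)$, that is, $\A$ is regular.

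The only substantive ingredient is the passage from the bare definition of regularity (a function vanishing \emph{on} a closed set) to the neighbourhood form (a function vanishing on an open \emph{neighbourhood} of that set); this is the expected hard point, although it is a standard fact about regular commutative Banach algebras. Everything else is bookkeeping with closed subsets of products of compact Hausdorff spaces, together with the identification $\mis(\A) = X\times\mis(A)$ — which rests on the standing hypothesis and on Corollary~\ref{cor:regular-BFA-are-natural} — and the semisimplicity of $A$, which lets one pass freely between an element of $\A$ and its Gelfand transform.
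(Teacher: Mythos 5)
Your proof is correct, and it arrives at the same kind of witness as the paper --- an elementary product $f=g\,b$ with $g\in\fA$ and $b\in A$, whose Gelfand transform at $\psi\dm\phi$ is $\hat g(\psi)\hat b(\phi)$ --- but via a different decomposition and with a different main input. The paper argues outward from the point: it encloses $\tau=\psi\dm\phi$ in a product neighbourhood $U\times V$ disjoint from $E$ and applies only the bare definition of regularity of $\fA$ and $A$ to the closed sets $\mis(\fA)\setminus U$ and $\mis(A)\setminus V$; the resulting $\hat f$ vanishes off $U\times V$, hence on $E$. You argue inward from $E$: you annihilate a neighbourhood of the fibre $F_1=\set{\phi:(x_0,\phi)\in E}$ using $A$, then annihilate the projection of the remainder using $\fA$. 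The price is that for $A$ you need the neighbourhood form of regularity ($J_{F_1}\not\subseteq I_{\phi_0}$), which you correctly flag; it is classical (regular implies normal, cf.\ the Remark following Definition \ref{dfn:regularity}) but strictly more than the definitional form the paper uses, and it is genuinely needed in your argument, since without the open set $V$ the leftover set $E\setminus(X\times V)$ need not be closed and its projection need not avoid $x_0$ (for $\fA$, by contrast, vanishing on $F_2$ itself would already suffice). Your detour through Corollary \ref{cor:regular-BFA-are-natural} to write $\mis(\A)=X\times\mis(A)$ is harmless but not necessary: the same slice argument runs verbatim on $\mis(\fA)\times\mis(A)$ with $\hat f(\xi,\phi)=\hat g(\xi)\hat b(\phi)$. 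In short, the paper's route is shorter and needs less machinery; yours trades that for a construction adapted to the closed set $E$ at the cost of one standard extra lemma, and it is sound.
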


\begin{proof}
  Let $E$ be a closed subset of $\mis(\A)$ and let $\tau\in \mis(\A) \setminus E$.
  Take $\psi\in \mis(\fA)$ and $\phi\in \mis(A)$ such that $\tau=\psi\dm \phi$.
  Let $W$ be a neighbourhood of $\tau$ in $\mis(\A)$ such that $W\cap E=\emptyset$.
  There exist neighbourhoods $U$ of $\psi$ in $\mis(\fA)$ and $V$ of $\phi$ in $\mis(A)$
  such that $U\times V \subset W$. Since $\fA$ is regular, there exists an element $g\in \fA$
  such that $\hat g=0$ on $\mis(\fA)\setminus U$ and $\hat g(\psi)=1$. Similarly, there exists
  an element $a\in A$ such that $\hat a=0$ on $\mis(A)\setminus V$ and $\hat a(\phi)=1$.
  Now let $f=g a$. Then $f \in \A$, $\hat f(\tau)=1$ and $\hat f=0$ on $F$, where
  \[
    F=\bigl((\mis(\fA)\setminus U)\times \mis(A)\bigr) \cup \bigl(\mis(\fA)\times(\mis(A)\setminus V)\bigr).
  \]

   Since $E\subset F$, we get $\hat f=0$ on $E$. Therefore, $\A$ is regular.
\end{proof}

As a consequence of the above theorem, we get the following result,
which appears in \cite[Theorem 4.2.20]{CBA}.

\begin{cor}\label{cor:regularity-for-fA-tp-A}
  If\/ $\fA$ and $A$ are regular, then any Tomiyama product $\fA\htp_\gamma A$ is regular.
\end{cor}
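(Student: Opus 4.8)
The plan is to obtain this corollary as a direct special case of Theorem \ref{thm:regularity-for-vv-FA}, so all the work lies in recognising a Tomiyama product as one of the vector-valued function algebras covered by that theorem and by the standing hypotheses of Section \ref{sec:main}. First I would recall, from the remark in Section \ref{sec:pre}, that $\fA\htp_\gamma A$ is a Banach $A$-valued function algebra on the compact Hausdorff space $\cX=\mis(\fA)$, in which the constant functions $\cX\to A$, $\xi\mapsto a$, are the elements $\U\tp a$, and $\fA$ is embedded as the functions $\hat f\cdot\U$ with $f\in\fA$. By Tomiyama's theorem, quoted in that remark, $\mis(\fA\htp_\gamma A)=\mis(\fA)\times\mis(A)=\cX\times\mis(A)$, which is precisely the standing assumption made at the start of Section \ref{sec:main}.

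Next I would verify the two remaining points needed before Theorem \ref{thm:regularity-for-vv-FA} can be invoked: that $\fA\htp_\gamma A$ is admissible, and that its subalgebra of scalar-valued functions, $(\fA\htp_\gamma A)\cap\c(\cX)$, is exactly $\fA$. For a character $\phi\in\mis(A)$, the slice homomorphism $\mathrm{id}_\fA\tp\phi$ on the algebraic tensor product extends---because $\gamma$ is not less than the injective tensor product norm---to a bounded homomorphism $\fA\htp_\gamma A\to\fA$, and on a function $h$ this extension coincides with $\phi\circ h$; hence $\phi\circ h\in\fA\subseteq\fA\htp_\gamma A$, giving admissibility. Since for an admissible algebra the scalar subalgebra is $\{\phi\circ h:h\in\fA\htp_\gamma A\}$, and applying the same slice map to $f\tp\U$ returns $f$, the range of that map is all of $\fA$; so the scalar subalgebra is precisely $\fA$.

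Once these identifications are in place, the algebra $\A=\fA\htp_\gamma A$ meets all the standing hypotheses of Section \ref{sec:main}, and the regularity of $\fA$ and $A$ then yields, via Theorem \ref{thm:regularity-for-vv-FA}, that $\fA\htp_\gamma A$ is regular. I do not expect a genuine obstacle in this argument; the one step that calls for a little care is confirming that the scalar-valued subalgebra of $\fA\htp_\gamma A$ is $\fA$ itself rather than a strictly larger subalgebra of $\c(\cX)$, and this is exactly the place where the hypothesis $\gamma\ge\e$ is used, through the boundedness of the slice maps $\mathrm{id}_\fA\tp\phi$ and the faithful realisation of $\fA\htp_\gamma A$ as functions on $\cX$.
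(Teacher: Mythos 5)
Your proposal is correct and follows the same route as the paper, which states the corollary as an immediate consequence of Theorem \ref{thm:regularity-for-vv-FA} together with the identifications already recorded in the Remark of Section \ref{sec:pre} (the Tomiyama product as an admissible Banach $A$-valued function algebra on $\mis(\fA)$ with scalar subalgebra $\fA$ and character space $\mis(\fA)\times\mis(A)$). Your explicit verification of admissibility and of the scalar subalgebra via the slice maps, using $\gamma\ge\e$, is exactly the bookkeeping the paper leaves implicit.
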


\subsection{Bounded relative units}
Our first result extends \cite[Theorem 2]{Nikou-Ofarrell-Ditkin}.

\begin{thm}\label{thm:BRU}
  For the $A$-valued function algebra $\A$, suppose that
  $\fA$ and $A$ admit bounded relative units. Then $\A$ admits bounded relative units.
\end{thm}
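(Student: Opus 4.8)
The plan is to fix a character $\tau=\psi\dm\phi$ of $\A$ (using the standing assumption $\mis(\A)=\mis(\fA)\times\mis(A)$) and to manufacture a bounded relative unit for $\A$ at $\tau$ out of bounded relative units for $\fA$ at $\psi$ and for $A$ at $\phi$, with a norm bound depending only on the constants $M_\psi$ and $M_\phi$ supplied by the hypotheses. Throughout I identify $\fA$ and $A$ with subalgebras of $\A$; since each evaluation $\cE_x:\A\to A$ is continuous, the inclusions $\fA\hookrightarrow\A$ and $A\hookrightarrow\A$ are continuous by the closed graph theorem, so the intrinsic norms of $\fA$ and $A$ are equivalent to the norms they carry inside $\A$ — a point worth recording for the final estimate. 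I also use the elementary fact that, for $g\in\fA$ and $a\in A$ viewed in $\A$, the Gelfand transforms on $\mis(\A)=\mis(\fA)\times\mis(A)$ satisfy $\hat g(\psi',\phi')=\hat g(\psi')$ and $\hat a(\psi',\phi')=\hat a(\phi')$, which is immediate from $\phi'\circ g=g$ and $\phi'\circ a=\phi'(a)\U$.

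Let $K\subseteq\mis(\A)\setminus\{\tau\}$ be compact. The first step is a covering argument. Each point $(\psi',\phi')\in K$ differs from $(\psi,\phi)$ in at least one coordinate, so, since $\mis(\fA)$ and $\mis(A)$ are Hausdorff, it lies either in a vertical tube $U\times\mis(A)$ with $U$ a neighbourhood of $\psi'$ whose closure misses $\psi$, or in a horizontal tube $\mis(\fA)\times V$ with $V$ a neighbourhood of $\phi'$ whose closure misses $\phi$. By compactness finitely many such tubes cover $K$; taking the union of the (finitely many) $U$'s and of the $V$'s yields a compact set $L_1\subseteq\mis(\fA)\setminus\{\psi\}$ and a compact set $L_2\subseteq\mis(A)\setminus\{\phi\}$ with $K\subseteq(L_1\times\mis(A))\cup(\mis(\fA)\times L_2)$.

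Now apply the hypotheses: choose $g\in J_\psi\subseteq\fA$ with $\hat g|_{L_1}=1$ and $\|g\|\le M_\psi$, and $a\in J_\phi\subseteq A$ with $\hat a|_{L_2}=1$ and $\|a\|\le M_\phi$. Put $f=g+a-ga\in\A$, so that $1-\hat f=(1-\hat g)(1-\hat a)$ on $\mis(\A)$. On $K$ every point lies in $L_1\times\mis(A)$ (whence $\hat g=1$) or in $\mis(\fA)\times L_2$ (whence $\hat a=1$), so $\hat f=1$ on $K$. Since $g\in J_\psi$ and $a\in J_\phi$, there are neighbourhoods $\Omega_1\ni\psi$ in $\mis(\fA)$ and $\Omega_2\ni\phi$ in $\mis(A)$ on which $\hat g$, respectively $\hat a$, vanish; hence $\hat f=0$ on the neighbourhood $\Omega_1\times\Omega_2$ of $\tau$, i.e.\ $f\in J_\tau$. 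Finally $\|f\|\le\|g\|+\|a\|+\|g\|\,\|a\|$, which, after inserting the norms of the inclusions $\fA,A\hookrightarrow\A$, is bounded by a constant $M_\tau$ depending only on $M_\psi$ and $M_\phi$ and not on $K$. Thus $\A$ admits bounded relative units at $\tau$, and since $\tau$ was arbitrary, $\A$ admits bounded relative units.

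The main obstacle is the covering step: one must resist trying to separate $K$ from the entire cross $(\{\psi\}\times\mis(A))\cup(\mis(\fA)\times\{\phi\})$, which is impossible since $K$ may approach it, and instead separate coordinate by coordinate, letting the product identity $1-\hat f=(1-\hat g)(1-\hat a)$ perform the gluing. A secondary, purely technical point — worth a sentence — is verifying the equivalence of the norms of $\fA$ and $A$ with the norms induced from $\A$, so that the bound $M_\tau$ is genuinely uniform in $K$.
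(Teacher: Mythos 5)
Your proof is correct and follows essentially the same route as the paper: decompose $K$ inside $(L_1\times\mis(A))\cup(\mis(\fA)\times L_2)$ with compact $L_1\subseteq\mis(\fA)\setminus\{\psi\}$, $L_2\subseteq\mis(A)\setminus\{\phi\}$, take relative units $g,a$ in the two factors, and glue them via $f=g+a-ga$ with the bound $M_\psi+M_\phi+M_\psi M_\phi$. The only cosmetic difference is that the paper obtains the two compact sets more directly as $E=\mis(\fA)\setminus U$ and $F=\mis(A)\setminus V$ for a basic product neighbourhood $U\times V$ of $\tau$ disjoint from $K$, rather than by your finite subcover of tubes.
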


\begin{proof}
  Let $\tau\in \mis(\A)$.
  Take $\psi\in \mis(\fA)$ and $\phi\in \mis(A)$ such that $\tau=\psi\dm \phi$.
  Let $K$ be a compact set in $\mis(\A)$ such that $\tau\notin K$.
    Let $W$ be a neighbourhood of $\tau$ in $\mis(\A)$ such that $W\cap K=\emptyset$.
  There exist neighbourhoods $U$ of $\psi$ in $\mis(\fA)$ and $V$ of $\phi$ in $\mis(A)$
  such that $U\times V \subset W$. Then $(U \times V) \cap K = \emptyset$. Set $E=\mis(\fA)\setminus U$
  and $F = \mis(A)\setminus V$. Then $E$ and $F$ are compact sets in $\mis(\fA)$ and $\mis(A)$,
  respectively, such that $\psi\notin E$, $\phi\notin F$ and
   \[
     K\subset(E\times\mis(A)) \cup (\mis(\fA)\times F).
   \]

   Since $\fA$ admits bounded relative units at $\psi$, there exists a constant $M_\psi$,
   a function $g\in \fA$, and a neighbourhood $U_1$ of $\psi$, such that $\hat g=0$ on $U_1$,
   $\hat g=1$ on $E$, and $\|g\|\leq M_\psi$. Since $A$ admits bounded relative units at $\phi$,
   there exists a constant $M_\phi$, an element $a\in A$, and a neighbourhood $V_1$ of $\phi$,
   such that $\hat a=0$ on $V_1$, $\hat a=1$ on $F$, and $\|a\|\leq M_\phi$.
   Take $W_1 = U_1\times V_1$ and $f = g+a-ga$. Then $f\in\A$, $\hat f =0$ on $W_1$,
   $\hat f=1$ on $K$, and
     \begin{equation*}
       \|f\|=\| g+a-ga \| \leq M_\psi + M_\phi + M_\psi M_\phi.
     \end{equation*}
   Therefore, $\A$ admits bounded relative units at $\tau$.
\end{proof}

\begin{cor}
  If\/ $\fA$ and $A$ have bounded relative units, then any Tomiyama product
  $\fA \htp_\gamma A$ has bounded relative units.
\end{cor}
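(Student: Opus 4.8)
The plan is to deduce this immediately from Theorem \ref{thm:BRU}, exactly as Corollary \ref{cor:regularity-for-fA-tp-A} was deduced from Theorem \ref{thm:regularity-for-vv-FA}; the only point requiring attention is to check that a Tomiyama product falls within the scope of that theorem. First I would invoke the Remark of Section \ref{sec:pre}: since $\fA$ and $A$ are Banach function algebras on $\cX=\mis(\fA)$ and $\cY=\mis(A)$ respectively, and $\gamma$ is an algebra cross-norm dominating the injective tensor norm, the completion $\A:=\fA\htp_\gamma A$ is a Banach $A$-valued function algebra on $\cX$, with each elementary tensor $f\tp a$ identified with the function $\xi\mapsto\hat f(\xi)a$ on $\cX$.

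Next I would verify that $\A$ is admissible with $\fA=\A\cap\c(\cX)$. On elementary tensors, composition with a character $\phi\in\mis(A)$ sends $f\tp a\mapsto \phi(a)f$; because $\gamma$ is a cross-norm dominating $\e$, one has $\norm{f}_\cX\norm{a}\le\gamma(f\tp a)$ and the injective norm controls evaluations, so this map is $\gamma$-bounded and extends to a continuous homomorphism $\A\to\A$ whose range is the subalgebra of scalar-valued functions, which is $\fA$. Hence $\A$ is admissible and $\fA=\A\cap\c(\cX)$. By Tomiyama's theorem (cited in that Remark), $\mis(\A)=\mis(\fA)\times\mis(A)$, so $\A$ meets the standing hypotheses of Section \ref{sec:main}.

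Finally, since by assumption $\fA$ and $A$ admit bounded relative units, Theorem \ref{thm:BRU} applies verbatim and yields that $\A=\fA\htp_\gamma A$ admits bounded relative units. I expect the only (minor) obstacle to be the bookkeeping in the admissibility step, namely confirming that $f\tp a\mapsto\phi(a)f$ is $\gamma$-bounded and that its image is precisely $\fA$ rather than a larger algebra of scalar functions; this is handled by the cross-norm inequalities above. Everything else is a direct appeal to results already established.
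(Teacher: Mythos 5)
Your proposal is correct and follows exactly the paper's intended route: the paper treats this corollary as an immediate consequence of Theorem \ref{thm:BRU}, since by the Remark in Section \ref{sec:pre} a Tomiyama product $\fA\htp_\gamma A$ is an admissible Banach $A$-valued function algebra with $\fA=\A\cap\c(\cX)$ and, by Tomiyama's theorem, $\mis(\A)=\mis(\fA)\times\mis(A)$, which is the standing hypothesis of Section \ref{sec:main}. Your extra verification that $\mathrm{id}\otimes\phi$ is $\gamma$-bounded with range $\fA$ is just a careful spelling-out of that Remark, not a different argument.
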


The uniform algebra $\c(X)$ has bounded relative units. Therefore,
the algebra $\c(X,A)$ has bounded relative units if, and only if, $A$ has bounded relative units
(\cite[Corollary 1.1]{Nikou-Ofarrell-Ditkin}).

\subsection{Strong regularity}
If $\A$ is strongly regular then,
by Theorem \ref{thm:if-AA-has-P-then-fA-and-A-have-P}~(\ref{item:(st)reg}),
$\fA$ and $A$ are strongly regular. We show that the converse is true if each of the following conditions holds;
\begin{enumerate}
  \item $\A$ is an $A$-valued uniform algebra and $\fA$ admits bounded relative units,
  \item $\fA A$, the subalgebra of $\A$ generated by $\fA \cup A$, is dense in $\A$.
\end{enumerate}

First, we remark that if $\A$ is strongly regular at every point of $X\times\mis (A)$,
then $\A$ is regular and $\mis(\A)=X\times \mis(A)$. To see this, consider $\A$ as a
(complex) Banach function algebra on $\cX=\mis(\A)$ and let $K=X\times \mis(A)$.
Then $K$ is a closed subset of $\cX$ and $I_K=\{0\}$. Now, apply
Theorem \ref{thm:if-A-is-st-regular-on-K-and-I(K)=0} to get $\mis(\A)=K$.

\begin{thm}\label{thm:st-regularity-when-fA-has-BRU}
  Suppose that $\A$ is an $A$-valued uniform algebra and that $\fA$ admits bounded relative units.
  Then $\A$ is strongly regular if and only if $A$ is strongly regular.
\end{thm}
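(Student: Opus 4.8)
The plan is the following. The forward implication needs nothing new: if $\A$ is strongly regular, then $A$ is strongly regular by Theorem~\ref{thm:if-AA-has-P-then-fA-and-A-have-P}~(\ref{item:(st)reg}). So I would concentrate on the converse and begin by extracting the structure the hypotheses supply. Since $\A$ is an $A$-valued uniform algebra, its norm is equivalent to $\enorm_X$, hence so is the norm of $\fA=\c(X)\cap\A$; thus $\fA$ is a uniform algebra on $X$. As $\fA$ moreover admits bounded relative units, the remarks on uniform algebras in Section~\ref{sec:pre} show that $\fA$ satisfies strong Ditkin's condition, so in particular $\fA$ is strongly regular, hence normal and natural, whence $\mis(\fA)=X$ and therefore $\mis(\A)=X\times\mis(A)$. (Alternatively one may invoke the remark preceding the theorem together with Theorem~\ref{thm:if-A-is-st-regular-on-K-and-I(K)=0}.) It then suffices to show that $\A$ is strongly regular at each character $\tau=\e_x\dm\phi$ with $x\in X$ and $\phi\in\mis(A)$; and since the norm of $\A$ is equivalent to $\enorm_X$, it is enough, given $f\in I_\tau$ and $\e>0$, to produce $g\in J_\tau$ with $\norm{f-g}_X$ as small as we like.

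So fix $f\in I_\tau$ and $\e>0$, let $M$ be the constant furnished by the bounded relative units of $\fA$ at $x$, and note that $f(x)\in I_\phi$ in $A$. Using that $A$ is strongly regular at $\phi$, choose $c\in J_\phi$ with $\norm{f(x)-c}_A<\delta$, where $\delta$ is a threshold to be fixed of order $\e/(1+M)$ (times the norm-equivalence constant); say $\hat c=0$ on a neighbourhood $V$ of $\phi$ in $\mis(A)$. By continuity of $x'\mapsto f(x')$ there is a neighbourhood $U_0$ of $x$ in $X$ with $\norm{f(x')-c}_A<\delta$ for all $x'\in U_0$. Now apply the bounded relative units of $\fA$ at $x$ to the compact set $X\setminus U_0$: there is $h\in\fA$ with $\norm{h}_X\le M$, $\hat h=1$ on $X\setminus U_0$, and $\hat h=0$ on a neighbourhood $U_1\subseteq U_0$ of $x$. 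Put
\[
  g=c+(f-c)\,h\in\A .
\]

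The verification splits in two. First, $f-g=(f-c)(1-h)$ vanishes on $X\setminus U_0$ (where $h\equiv 1$), and on $U_0$ it is dominated pointwise in $A$-norm by $\delta(1+M)$, so $\norm{f-g}_X\le\delta(1+M)$, which is $<\e$ in the norm of $\A$ once $\delta$ is chosen small. Second, for $(x',\phi')\in U_1\times V$ one computes
\[
  \phi'\bigl(g(x')\bigr)=\phi'(c)\bigl(1-h(x')\bigr)+h(x')\,\phi'\bigl(f(x')\bigr)=0,
\]
since $\hat c=0$ on $V$ and $h=0$ on $U_1$; hence $\hat g=0$ on the neighbourhood $U_1\times V$ of $\tau$, i.e.\ $g\in J_\tau$. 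This gives strong regularity of $\A$ at $\tau$, and since $\tau$ was arbitrary, of $\A$.

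I expect the one point requiring care — and the place where the full strength of the hypothesis on $\fA$ enters — to be the order of the choices. The relative unit $h$ is available only after the neighbourhood $U_0$ has been fixed, yet the estimate for $\norm{f-g}_X$ involves $\norm{h}_X$; were $\fA$ merely normal, $\norm{h}_X$ could grow without control as $U_0$ shrinks, and the argument would be circular. It is precisely the constant $M$ supplied by \emph{bounded} relative units, independent of the compact set, that breaks the loop. A secondary subtlety is that $c$ must be a genuine element of $J_\phi$, not just an approximant of $f(x)$ in $A$: it is the vanishing of $\hat c$ on an entire neighbourhood of $\phi$ that makes $\hat g$ vanish on an honest neighbourhood of $\tau$, rather than merely be small there.
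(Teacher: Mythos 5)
Your argument is correct and is essentially the paper's proof: your corrector $c+(f-c)h$ is exactly the paper's $G=f-(\U-g)(f-b)$, built from the same ingredients — an element of $J_{\phi}$ approximating $f(x)$ supplied by strong regularity of $A$, the neighbourhood where $\norm{f(\cdot)-b}$ is small, and a bounded relative unit of $\fA$ equal to $1$ off that neighbourhood — with the same two-case estimate of $\norm{f-G}_X$ and the same verification that $\hat G$ vanishes on a product neighbourhood of $\tau$. The only (harmless) difference is cosmetic: you obtain $\mis(\fA)=X$ by routing through the strong Ditkin property of uniform algebras with bounded relative units, whereas the paper gets it directly from regularity of $\fA$.
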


\begin{proof}
  If $\A$ is strongly regular, then $A$ is strongly regular
  by Theorem \ref{thm:if-AA-has-P-then-fA-and-A-have-P}~(\ref{item:(st)reg}).
  Conversely, assume that $A$ is strongly regular. Note that $\fA$ is regular and
  $\mis(\fA)=X$. Given $\tau_0\in \mis(\A)$, there exist $x_0\in X$ and $\phi_0\in \mis (A)$
  such that $\tau_0 = \e_{x_0}\dm\phi_0$.
  Let $f\in I_{\tau_0}$ and $\epsilon > 0$. Let $a=f(x_0)$. Then $\phi_0(a)=0$ and,
  since $A$ is strongly regular at $\phi_0$, there exist a neighbourhood $V_0$ of $\phi_0$
  and an element $b\in A$ such that $\hat b=0$ on $V_0$ and $\| a - b\| < \e$. Set
  \begin{equation*}
     U=\bigset{x\in X : \|f(x)- b\|< \e}.
  \end{equation*}

  Since $\fA$ admits bounded relative units at $x_0$, there exist a constant $M_{x_0}$,
  a function $g\in \fA$ and a neighbourhood $U_0$ of
  $x_0$, such that $g|_{X\setminus U} = 1$, $g|_{U_0}=0$ and $\| g\| < M_{x_0}$.
  Let $G= f - (\U - g)(f - b)$ and $W_0 = U_0\times V_0$.
  Then, for each $\tau=\e_x\dm\phi$ in $U_0\times V_0$, we have
  \[
   \tau(G) = \phi(f(x)) - (\U - g(x))\phi(f(x) - b)
   = \phi(f(x)) - \phi(f(x))=0.
  \]
  Therefore, $G\in J_{\tau_0}$. Moreover, if $x\in U$ then
  \begin{align*}
    \|f(x)-G(x)\|
     & = \| (\U - g(x))(f(x) - b)\|  \\
     & \leq (1+M_{x_0})\|f(x)-b\| \leq (1+M_{x_0})\e,
  \end{align*}
  and if $x\in X\setminus U$ then $\|f(x)-G(x)\| = \| (\U - g(x))(f(x) - b)\| =0$.
  Hence,
    \begin{equation*}
      \|f - G\|_X \leq (1+M_{x_0})\epsilon.
    \end{equation*}

    We conclude that $\A$ is
  strongly regular at $\tau_0$.
\end{proof}

\begin{cor}
  Let $\fA$ be a uniform algebra on $X$ having bounded relative units.
  The injective tensor product $\fA\htp_\e A$ is strongly regular if and only if
  $A$ is strongly regular.
\end{cor}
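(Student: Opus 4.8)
The plan is to recognise the injective tensor product $\fA\htp_\e A$ as an $A$-valued uniform algebra on $X$ and then to invoke Theorem~\ref{thm:st-regularity-when-fA-has-BRU} directly. As a preliminary remark, I would note that a Banach function algebra with bounded relative units is automatically regular: given a closed (hence compact) set $E\subseteq\mis(\fA)$ and $\psi_0\in\mis(\fA)\setminus E$, an application of the defining property at $\psi_0$ with $K=E$ produces $g\in J_{\psi_0}$ with $\hat g|_E=1$; since $g\in J_{\psi_0}$ forces $\hat g(\psi_0)=0$, the element $\U-g$ satisfies $\widehat{\U-g}(\psi_0)=1$ and $\widehat{\U-g}|_E=0$. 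Hence $\fA$ is regular, and so natural by Corollary~\ref{cor:regular-BFA-are-natural}, which gives $\mis(\fA)=X$; in particular the tensor-product construction from the preliminaries, carried out over $\mis(\fA)$, is a construction over $X$.

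Next I would set $\A=\fA\htp_\e A$ and verify that it fits the standing framework of this section. By the remark on Tomiyama products, $\A$ is a Banach $A$-valued function algebra on $X$, and by Tomiyama's theorem $\mis(\A)=\mis(\fA)\times\mis(A)=X\times\mis(A)$. Admissibility is immediate from the computation $\phi\circ\bigl(\sum_i\hat f_i\cdot a_i\bigr)=\sum_i\phi(a_i)\hat f_i\in\fA$ for $\phi\in\mis(A)$, which passes to uniform limits; the same computation identifies $\c(X)\cap\A$ with $\fA$. Finally, since the injective tensor product respects closed subspaces isometrically, $\fA\htp_\e A$ embeds isometrically into $\c(X)\htp_\e A\cong\c(X,A)$ carrying the uniform norm $\enorm_X$, so $\A$ is an $A$-valued uniform algebra.

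With these identifications, $\A=\fA\htp_\e A$ is an $A$-valued uniform algebra whose scalar subalgebra $\fA$ admits bounded relative units, so Theorem~\ref{thm:st-regularity-when-fA-has-BRU} applies and gives that $\A$ is strongly regular if and only if $A$ is strongly regular, which is precisely the statement of the corollary. I do not expect a genuine obstacle here: the only point that needs care is the bookkeeping of the second paragraph --- confirming that $\fA\htp_\e A$ really belongs to the class of $A$-valued uniform algebras on $X$ with $\fA=\c(X)\cap\A$ and $\mis(\A)=X\times\mis(A)$ --- and once that is in place the corollary is a direct quotation of the theorem.
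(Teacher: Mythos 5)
Your proposal is correct and follows the same route as the paper, which states this corollary as a direct application of Theorem~\ref{thm:st-regularity-when-fA-has-BRU} via the identification of $\fA\htp_\e A$ as an $A$-valued uniform algebra on $X$ with scalar subalgebra $\fA$ and $\mis(\fA\htp_\e A)=X\times\mis(A)$ (the Tomiyama product remark). Your additional bookkeeping --- that bounded relative units force $\fA$ to be regular, hence natural with $\mis(\fA)=X$, and that the injective norm makes $\A$ a uniform algebra inside $\c(X,A)$ --- is exactly the verification the paper leaves implicit.
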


\begin{cor}
  $\c(X,A)$ is strongly regular if and only if $A$ is strongly regular.
\end{cor}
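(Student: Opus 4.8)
The plan is to deduce this from Theorem~\ref{thm:st-regularity-when-fA-has-BRU}, so the real work is just to verify that $\c(X,A)$ fits the hypotheses of that theorem. First I would recall that $\c(X,A)$ is the prototypical admissible $A$-valued uniform algebra, with associated scalar algebra $\fA = \c(X,A)\cap\c(X) = \c(X)$, and that, by Hausner's theorem, $\mis(\c(X,A)) = X\times\mis(A) = \mis(\c(X))\times\mis(A)$, so that the standing assumption of Section~\ref{sec:main} is indeed satisfied here.

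Next I would check that the classical uniform algebra $\c(X)$ admits bounded relative units. Fix $\psi = \e_{x_0} \in \mis(\c(X)) = X$ and a compact set $K \subset X\setminus\set{x_0}$. By Urysohn's lemma there is $g\in\c(X)$ with $0\leq g\leq 1$, with $g\equiv 1$ on $K$, and with $g$ vanishing on some neighbourhood of $x_0$; in particular $g\in J_{x_0}$ and $\norm{g}_X = 1$. Hence $\c(X)$ admits bounded relative units at every point with the uniform constant $M_{x_0}=1$, and $\fA = \c(X)$ is a uniform algebra admitting bounded relative units (as already noted in the discussion preceding this subsection).

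With both hypotheses in place, Theorem~\ref{thm:st-regularity-when-fA-has-BRU} applies to $\A = \c(X,A)$ and yields at once that $\c(X,A)$ is strongly regular if and only if $A$ is strongly regular. I do not expect any genuine obstacle: the entire content is carried by Theorem~\ref{thm:st-regularity-when-fA-has-BRU}, and the corollary is simply the remark that the quadruple $(X, A, \c(X), \c(X,A))$ is the canonical instance of its setting.
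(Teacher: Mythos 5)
Your proposal is correct and matches the paper's intent exactly: the corollary is deduced from Theorem~\ref{thm:st-regularity-when-fA-has-BRU} by observing that $\c(X,A)$ is an admissible $A$-valued uniform algebra with $\fA=\c(X)$, natural by Hausner's theorem, and that $\c(X)$ admits bounded relative units (your Urysohn-lemma verification, with constant $1$, is the standard justification the paper leaves implicit). Nothing is missing.
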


In the following, $\fA A$ represents the subalgebra of $\A$ generated by $\fA\cup A$.
Therefore, $f\in \fA A$ if and only if $f=\sum_{i=1}^n f_i a_i$, with $n\in\N$,
$f_i\in \fA$ and $a_i\in A$.

\begin{thm}\label{thm:strong-reg-when-fA.A-is-dense}
  Suppose that $\fA A$ is dense in $\A$. If $\fA$ and $A$ are strongly regular,
  then $\A$ is strongly regular.
\end{thm}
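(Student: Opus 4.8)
The plan is to establish strong regularity of $\A$ at an arbitrary $\tau_0\in\mis(\A)$. Since $\fA$ and $A$ are in particular regular, $\fA$ is natural (Corollary~\ref{cor:regular-BFA-are-natural}), so $\mis(\fA)=X$ and, by the standing hypothesis of this section, $\mis(\A)=X\times\mis(A)$; thus we may write $\tau_0=\e_{x_0}\dm\phi_0$ with $x_0\in X$ and $\phi_0\in\mis(A)$. Fix $f\in I_{\tau_0}$ and $\e>0$. The first step is to reduce to elements of $\fA A$. By density there is $h\in\fA A$ with $\|f-h\|<\e$; replacing $h$ by $h-\tau_0(h)\U$, which again lies in $\fA A$ (as $\U\in A\subseteq\fA A$ and $\fA A$ is a linear subspace) and now lies in $I_{\tau_0}$ (since $\tau_0(\U)=1$), and noting that $\tau_0(h)=\tau_0(h-f)$ can be made arbitrarily small by continuity of $\tau_0$, we may assume $h\in\fA A\cap I_{\tau_0}$ with $\|f-h\|<(1+\|\U\|)\e$. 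Hence it suffices to show that every element of $\fA A\cap I_{\tau_0}$ lies in $\overline{J_{\tau_0}}$.

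Next I would exploit the separated-variables form of such an element. Write $h=\sum_{i=1}^n f_i a_i$ with $f_i\in\fA$ and $a_i\in A$, and set $g_i=f_i-f_i(x_0)$ and $c=\sum_{i=1}^n f_i(x_0)a_i$, so that $g_i\in I_{x_0}$ in $\fA$, $c\in A$, and $h=\sum_{i=1}^n g_i a_i+c$. Since $\tau_0(g_i a_i)=g_i(x_0)\phi_0(a_i)=0$ for each $i$ and $\tau_0(h)=0$, it follows that $\phi_0(c)=0$, i.e. $c\in I_{\phi_0}$ in $A$.

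Now I would approximate the two pieces separately, using strong regularity of $A$ at $\phi_0$ and of $\fA$ at $x_0$. Given $\delta>0$: there is $c'\in J_{\phi_0}$ in $A$ with $\|c-c'\|<\delta$, and, viewed as a constant function in $\A$, its transform satisfies $\widehat{c'}(\e_x\dm\phi)=\phi(c')$, which vanishes for $\phi$ in a neighbourhood $V$ of $\phi_0$; hence $\widehat{c'}=0$ on $X\times V$ and $c'\in J_{\tau_0}$. Likewise, for each $i$ there is $g_i'\in J_{x_0}$ in $\fA$ with $\|g_i-g_i'\|<\delta$, and since $\widehat{g_i'a_i}(\e_x\dm\phi)=\widehat{g_i'}(x)\,\widehat{a_i}(\phi)$ with $\widehat{g_i'}$ vanishing on a neighbourhood $U_i$ of $x_0$ in $X$, the function $g_i'a_i$ vanishes on $U_i\times\mis(A)$, so $g_i'a_i\in J_{\tau_0}$. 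Put $G=\sum_{i=1}^n g_i'a_i+c'$. Then $G\in J_{\tau_0}$, because it vanishes on the neighbourhood $\bigl(\bigcap_{i=1}^n U_i\bigr)\times V$ of $\tau_0$, and $\|h-G\|\le\sum_{i=1}^n\|g_i-g_i'\|\,\|a_i\|+\|c-c'\|\le\delta\bigl(1+\sum_{i=1}^n\|a_i\|\bigr)$, which is below $\e$ once $\delta$ is small enough. Therefore $h\in\overline{J_{\tau_0}}$, completing the argument. (The structure parallels the proof of Theorem~\ref{thm:st-regularity-when-fA-has-BRU}, with the density of $\fA A$ together with strong regularity of $\fA$ playing the role there played by the bounded relative units of $\fA$.)

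The only delicate bookkeeping is the passage between the ideals of $\fA$, of $A$, and of $\A$ under the identification $\mis(\A)=X\times\mis(A)$: one must check that a scalar function vanishing near $x_0$, or an element of $A$ whose Gelfand transform vanishes near $\phi_0$, produces an element of $\A$ whose transform vanishes on a \emph{full} (product-type) neighbourhood of $\tau_0$, and that the finitely many such neighbourhoods intersect in a neighbourhood of $\tau_0$. I do not expect any genuine obstacle here; the density of $\fA A$ is precisely what reduces the problem to the one-variable strong regularity already assumed for $\fA$ and $A$.
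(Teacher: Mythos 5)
Your proposal is correct and follows essentially the same route as the paper's proof: reduce to $\fA A$ by density and by subtracting $\tau_0(h)\U$ (using $|\tau_0(h)|=|\tau_0(h-f)|\le\|h-f\|$), decompose $h=\sum_i g_ia_i+c$ with $g_i\in I_{x_0}$ and $c\in I_{\phi_0}$, and then apply strong regularity of $\fA$ and $A$ separately to build $G\in J_{\tau_0}$ vanishing on a product neighbourhood. The only differences are cosmetic bookkeeping (your $(1+\|\U\|)\e$ versus the paper's $\e/3$ splitting, and performing the density reduction first rather than last).
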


\begin{proof}
  Note that the algebra $\A$ is regular and $\mis(\A)= X\times\mis(A)$.
  Take a character $\tau_0=\varepsilon_{x_0}\dm\phi_0$ in $X\times \mis(A)$, let
  $F\in I_{\tau_0}$ and let $\e>0$.

  First, assume that $F\in \fA A$ and it is of the form $F=f_1a_1+\dotsb+f_na_n$,
  with $f_i\in \fA$ and $a_i\in A$, $1\leq i \leq n$, $n\in\N$. Write
  \begin{equation*}
    F = \sum_{i=1}^n \bigl(f_i-f_i(x_0)\bigr)a_i + \sum_{i=1}^n f_i(x_0)a_i
      = \sum_{i=1}^n g_i a_i + a,
  \end{equation*}
  where $g_i=f_i-f_i(x_0)$ and $a = \sum_{i=1}^n f_i(x_0)a_i$. We see that
  \begin{equation*}
    \phi_0(a) = \phi_0\Bigprn{\sum_{i=1}^n f_i(x_0)a_i} = \sum_{i=1}^n f_i(x_0)\phi_0(a_i)
    = \tau_0(F) = 0.
  \end{equation*}

  Since $A$ is strongly regular at $\phi_0$, there exists a neighbourhood $V_0$ of $\phi_0$ in $\mis(A)$,
  and an element $b\in A$ such that $\hat b=0$ on $V_0$ and $\|a - b\| <\epsilon/2$.
  Since $\fA$ is strongly regular at $x_0$ and $g_i(x_0)=0$, for $1\leq i \leq n$,
  there exists, for each $i$, a neighbourhood $U_i$ of $x_0$ in $X$ and a function $h_i\in\fA$,
  such that $h_i=0$ on $U_i$ and $\|h_i - g_i\| < \e/M$, where $M=2\sum_{i=1}^n \|a\|_i$.
  Take
  \begin{equation*}
    G=\sum_{i=1}^n h_i a_i + b \quad \text{and} \quad
     W = \bigcap_{i=1}^n U_i \times V_0.
  \end{equation*}

  Then $W$ is a neighbourhood of $\tau_0$ in $\mis(\A)$ and $\tau=\e_x \dm \phi \in W$
  implies that $x\in U_i$, $h_i(x)=0$, for $1\leq i \leq n$, and $\phi\in V_0$, $\phi(b)=0$.
  Therefore,
  \begin{equation*}
    \tau(G) = \sum_{i=1}^n h_i(x) \phi(a_i) + \phi(b) = 0,
  \end{equation*}
  meaning that $G\in J_{\tau_0}$. Moreover,
  \begin{equation*}
    \|F - G\| \leq \sum_{i=1}^n \|g_i-h_i\|\|a_i\| + \|a-b\| < \frac\e2+\frac\e2 = \e.
  \end{equation*}

  Now, consider the general case of $F\in I_{\tau_0}$. Since $\fA A$ is dense in $\A$,
  there exists $F_1\in \fA A$ such that $\|F-F_1\|<\e/3$. Let $F_0=F_1-\tau_0(F_1)\U$.
  Then $F_0\in \fA A$ and $\tau_0(F_0)=0$. Previous case applies to $F_0$ so that,
  for some function $G \in J_{\tau_0}$, we get $\|F_0 - G\| < \e/3$. We claim that
  $\|F-G\|<\e$. Write
  \begin{equation*}
    F - G = (F-F_0) + (F_0- G)= (F-F_1+\tau_0(F_1)\U)+(F_0-G).
  \end{equation*}

  \noindent
  Note that $\tau_0(F_1) = \tau_0(F_1-F)$ so that $|\tau_0(F_1)| \leq \|F_1-F\| < \e/3$.
  Therefore,
    \begin{equation*}
     \|F - G\| \leq \|F-F_1\| + |\tau_0(F_1)| + \|F_0- G\|
       < \e/3+\e/3+\e/3=\e.
    \end{equation*}
  We conclude that $\A$ is strongly regular at $\tau_0$.
\end{proof}

\begin{cor}
  If the algebras $\fA$ and $A$ are strongly regular, then any Tomiyama product
  $\fA \hat \tp_\gamma A$ is strongly regular.
\end{cor}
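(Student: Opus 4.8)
The plan is to recognise $\A = \fA \htp_\gamma A$ as a special case of the setting of Theorem~\ref{thm:strong-reg-when-fA.A-is-dense} and then invoke that theorem directly. First I would recall, from the Remark following the definition of vector-valued function algebras, that $\fA \htp_\gamma A$ is a Banach $A$-valued function algebra on $\cX = \mis(\fA)$: each elementary tensor $f \tp a$ is identified with the $A$-valued function $\xi \mapsto \hat f(\xi)\,a$ on $\cX$, and since $\gamma$ dominates the injective cross-norm this identification extends continuously and injectively to the completion. This algebra is admissible, since $\phi \circ (f \tp a) = \phi(a)\,f$ lies in $\fA$ for every $\phi \in \mis(A)$, and the map $g \mapsto \phi \circ g$ is continuous; hence $\phi \circ g \in \fA$ for all $g \in \fA \htp_\gamma A$, and the scalar subalgebra $(\fA \htp_\gamma A) \cap \c(\cX)$ is exactly the canonical copy $\fA \tp \C\U$ of $\fA$.

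Next I would observe that the subalgebra of $\A$ generated by $\fA \cup A$ is precisely the image of the algebraic tensor product $\fA \tp A$: every element of $\fA \tp A$ is a finite sum $\sum f_i \tp a_i$ with $f_i \in \fA$ and $a_i \in A$, and conversely such sums span $\fA \tp A$. Since, by construction, $\fA \tp A$ is dense in its $\gamma$-completion, the subalgebra $\fA A$ is dense in $\A$. Furthermore, by Tomiyama's theorem (quoted in the Remark), $\mis(\A) = \mis(\fA) \times \mis(A)$, so $\A$ satisfies the standing hypotheses imposed at the beginning of Section~\ref{sec:main}.

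With these identifications made, the corollary follows at once: $\fA$ and $A$ are assumed strongly regular and $\fA A$ is dense in $\A$, so Theorem~\ref{thm:strong-reg-when-fA.A-is-dense} applies and shows that $\A = \fA \htp_\gamma A$ is strongly regular. I do not expect any real obstacle here, since the substance lies entirely in the preceding theorem; the only point requiring a little care is the verification, sketched above, that the scalar subalgebra of $\fA \htp_\gamma A$ is $\fA$ and that the subalgebra $\fA A$ coincides with the dense subalgebra $\fA \tp A$.
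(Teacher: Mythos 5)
Your proposal is correct and follows exactly the route the paper intends: the Remark in Section~\ref{sec:pre} realizes $\fA \htp_\gamma A$ as an admissible Banach $A$-valued function algebra on $\mis(\fA)$ with character space $\mis(\fA)\times\mis(A)$, the algebraic tensor product gives the dense subalgebra $\fA A$, and Theorem~\ref{thm:strong-reg-when-fA.A-is-dense} then applies. Your extra verifications (admissibility via $\phi\circ(f\tp a)=\phi(a)f$ and the identification of the scalar subalgebra with $\fA$) are sound and only make explicit what the paper leaves implicit.
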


\subsection{Ditkin's condition}

If $\A$ is a Ditkin algebra then, by
Theorem \ref{thm:if-AA-has-P-then-fA-and-A-have-P}~(\ref{item:(st)Ditkin}),
$\fA$ and $A$ are Ditkin algebras.
We show that the converse is true provided $\A$ is an $A$-valued uniform algebra and
$\fA$ admits bounded relative units. This extends \cite[Theorem 1]{Nikou-Ofarrell-Ditkin}.

\begin{thm}\label{thm:Ditkin-when-fA-has-BRU}
  Let $\A$ be an $A$-valued uniform algebra such that $\fA$ admits bounded relative units.
  Then $\A$ is a Ditkin algebra if, and only if, $A$ is a Ditkin algebra.
\end{thm}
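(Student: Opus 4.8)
The plan is to prove the nontrivial implication, that $\A$ inherits Ditkin's condition from $A$; the converse is immediate from Theorem~\ref{thm:if-AA-has-P-then-fA-and-A-have-P}~(\ref{item:(st)Ditkin}). The whole argument will run parallel to the proof of Theorem~\ref{thm:st-regularity-when-fA-has-BRU}, the one change being that the strong regularity of $A$ is replaced by Ditkin's condition for $A$, which means the localizing element must be constructed multiplicatively. First I would record the preliminaries: since $\A$ is an $A$-valued uniform algebra, $\fA=\c(X)\cap\A$ is a uniform algebra on $X$; as it admits bounded relative units it satisfies strong Ditkin's condition, so in particular it is regular and hence natural by Corollary~\ref{cor:regular-BFA-are-natural}. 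Thus $\mis(\fA)=X$ and $\mis(\A)=X\times\mis(A)$, so every $\tau_0\in\mis(\A)$ is of the form $\tau_0=\e_{x_0}\dm\phi_0$ with $x_0\in X$ and $\phi_0\in\mis(A)$.

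Next, fix such a $\tau_0$, take $f\in I_{\tau_0}$ and $\e>0$, and set $a=f(x_0)$, so that $\phi_0(a)=\tau_0(f)=0$ and $a\in I_{\phi_0}$. Since $A$ satisfies Ditkin's condition at $\phi_0$, I would choose $c\in J_{\phi_0}$ with $\|a-ac\|<\e$, and fix a neighbourhood $V_0$ of $\phi_0$ in $\mis(A)$ on which $\hat c$ vanishes. Then $U=\set{x\in X:\|f(x)-f(x)c\|<\e}$ is an open subset of $X$ containing $x_0$, because $f(x_0)c=ac$. Applying the bounded relative units of $\fA$ at $x_0$ to the compact set $X\setminus U$ (taking $g=0$ when $U=X$), I obtain a constant $M_{x_0}$, a function $g\in\fA$ and a neighbourhood $U_0$ of $x_0$ with $g|_{U_0}=0$, $g|_{X\setminus U}=1$ and $\|g\|\le M_{x_0}$.

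The multiplier I would use is $G=g+c-gc\in\A$; put $W_0=U_0\times V_0$. For every $\tau=\e_x\dm\phi\in W_0$ one has $g(x)=0$ and $\phi(c)=0$, and since $\tau(g)=g(x)$ and $\tau(c)=\phi(c)$ it follows that $\tau(G)=g(x)+\phi(c)-g(x)\phi(c)=0$; hence $\hat G$ vanishes on the neighbourhood $W_0$ of $\tau_0$, so $G\in J_{\tau_0}$. From the identity $\U-g-c+gc=(\U-g)(\U-c)$ one computes, pointwise, $(f-fG)(x)=(\U-g(x))\bigprn{f(x)-f(x)c}$, which is $0$ for $x\in X\setminus U$ (there $g(x)=1$) and has norm at most $(1+\|g\|_X)\e\le(1+M_{x_0})\e$ for $x\in U$. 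Hence $\|f-fG\|_X\le(1+M_{x_0})\e$ and, since the norm of $\A$ is equivalent to the uniform norm, $\|f-fG\|\le C(1+M_{x_0})\e$ with $C$ independent of $f$ and $\e$. As $\e>0$ was arbitrary, $f\in\overline{fJ_{\tau_0}}$, so $\A$ satisfies Ditkin's condition at $\tau_0$; and $\tau_0$ being arbitrary, $\A$ is a Ditkin algebra.

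I do not expect a serious obstacle: the scheme of Theorem~\ref{thm:st-regularity-when-fA-has-BRU} carries over almost verbatim. The only point that needs a little thought is the choice of $G$, engineered so that it agrees with $c\in J_{\phi_0}$ near $x_0$ (where $\hat g=0$), which puts $G$ in $J_{\tau_0}$, while the factorisation $f-fG=(\U-g)(f-fc)$ lets $g$ absorb the contribution of the set $X\setminus U$, on which $f-fc$ need not be small; and, exactly as in Theorem~\ref{thm:st-regularity-when-fA-has-BRU}, the uniform-algebra hypothesis is used only to pass from the uniform-norm bound to a bound in the norm of $\A$.
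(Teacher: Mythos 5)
Your proposal is correct and follows essentially the same route as the paper's proof: the same element $a=f(x_0)$, the same choice of $c\in J_{\phi_0}$ with $\|a-ac\|<\e$, the same set $U=\set{x:\|f(x)-f(x)c\|<\e}$, the same relative unit $g$ of $\fA$ on $X\setminus U$, and the same multiplier $G=g+c-gc$ with the factorisation $f-fG=(\U-g)(f-fc)$. The only cosmetic differences are that you pass to regularity of $\fA$ via strong Ditkin's condition rather than directly from bounded relative units, and that you make explicit the equivalence of the $\A$-norm with the uniform norm, which the paper leaves implicit.
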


\begin{proof}
   We just need to consider the ``if'' part of the theorem. Suppose that $A$ is a Ditkin algebra.
   Since the algebra $\fA$ admits bounded relative units, it is regular from which we get
   $\mis(\fA)=X$ and $\mis(\A)=X\times \mis(A)$. Take $x_0\in X$ and $\phi_0\in\mis (A)$,
   and let $\tau_0=\e_{x_0}\dm\phi_0$. We need to show that $f\in \overline{f J}_{\tau_0}$,
   for every $f\in I_{\tau_0}$.

   Given $f\in I_{\tau_0}$ and $\e>0$, let $a=f(x_0)$. Then $\phi_0(a) = \phi_0(f(x_0))
   = \tau_0(f)=0$. Since $A$ satisfies Ditkin's condition at $\phi_0$, we have
   $a\in \overline{aJ}_{\phi_0}$,
   i.e., there exist $b\in A$ and a neighbourhood $V_0$ of $\phi_0$ such that
   $\hat b = 0$ on $V_0$ and $\|a-ab\|<\e$. Set
   \begin{equation*}
     U=\bigset{ x\in X: \| f(x)-f(x)b\|<\e }.
   \end{equation*}
   Then $U$ is a neighbourhood of $x_0$ and $K=X\setminus U$ is compact in $X$.
   Since $\fA$ admits bounded relative units
   at $x_0$, there exists a constant $M_{x_0}$, a function $g\in\fA$, and a neighbourhood
   $U_0$ of $x_0$ such that $g=0$ on $U_0$, $g=1$ on $X\setminus U$, and $\|g\| < M_{x_0}$.
   Let $G=g+b-gb$ and $W_0 = U_0 \times V_0$. Then $\hat{G}=0$ on $W_0$ so that $G\in J_{\tau_0}$.
   If $x\in X\setminus U$ then $G(x)=\U$ and thus $\|f(x)-f(x)G(x)\|=0$. If $x\in U$ then
   \begin{align*}
    \|f(x)-f(x)G(x)\| = \|(\U-g(x))(f(x)-f(x)b)\| < (1+M_{x_0})\e.
   \end{align*}
   Therefore, $\|f-fG\|_X \leq \e(1+M_{x_0})$ and we conclude that $f\in \overline{f J}_{\tau_0}$.
\end{proof}

\begin{cor}\label{cor:fA-tp-A-is-Ditkin-iff-A-is-Ditkin}
  Suppose that $\fA$ is a uniform algebra on $X$ having bounded relative units.
  The injective tensor product $\fA\htp_\e A$ is a Ditkin algebra if and only if
  $A$ is a Ditkin algebra.
\end{cor}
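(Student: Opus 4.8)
The plan is to realize $\fA\htp_\e A$ as a concrete member of the class of algebras governed by Theorem~\ref{thm:Ditkin-when-fA-has-BRU}, and then simply invoke that theorem. On this approach the ``only if'' direction needs nothing beyond Theorem~\ref{thm:Ditkin-when-fA-has-BRU} (or Theorem~\ref{thm:if-AA-has-P-then-fA-and-A-have-P}), and essentially all the work is bookkeeping: checking that $\fA\htp_\e A$ is an admissible $A$-valued uniform algebra on $X$, with scalar part $\fA$, and that it is natural.

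I would first note that the hypothesis on $\fA$ already forces $\mis(\fA)=X$: since $\fA$ is a uniform algebra admitting bounded relative units, it satisfies strong Ditkin's condition at each point (by the discussion following Definition~\ref{dfn:st-Ditkin}), hence is strongly regular, hence regular, so $\mis(\fA)=X$ by Corollary~\ref{cor:regular-BFA-are-natural}. Next, by the Remark on Tomiyama products, $\fA\htp_\e A$ is a Banach $A$-valued function algebra on $\mis(\fA)=X$; because $\fA$ is a uniform algebra, the injective tensor norm on $\fA\tp A$ coincides with the uniform norm of the corresponding $A$-valued functions (injectivity of $\htp_\e$ applied to $\fA\subseteq\c(X)$, cf.\ \cite[Section 2.2]{Abtahi-BJMA}), so $\fA\htp_\e A$ is in fact an $A$-valued uniform algebra on $X$. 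It is admissible: for an elementary tensor $f\tp a$, one has $\phi\circ(\hat f\cdot a)=\phi(a)\hat f\in\fA$ for every $\phi\in\mis(A)$, and since $\phi\circ(\cdot)$ is $\enorm_X$-continuous on $\c(X,A)$ while $\fA$ is $\enorm_X$-closed in $\c(X)$, the same holds for every element of $\fA\htp_\e A$; the same computation shows that any scalar-valued element of $\fA\htp_\e A$ already lies in $\fA$, whence $(\fA\htp_\e A)\cap\c(X)=\fA$. Finally, by Tomiyama's theorem, $\mis(\fA\htp_\e A)=\mis(\fA)\times\mis(A)=X\times\mis(A)$, so $\fA\htp_\e A$ is natural and falls under the standing hypothesis of this subsection.

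Having set $\A=\fA\htp_\e A$ and verified these points, I would conclude by applying Theorem~\ref{thm:Ditkin-when-fA-has-BRU} verbatim: $\A$ is an $A$-valued uniform algebra and $\fA$ admits bounded relative units, so $\fA\htp_\e A$ is a Ditkin algebra if and only if $A$ is a Ditkin algebra. The only genuine obstacle is the middle step — correctly identifying $\fA\htp_\e A$ as an admissible, natural $A$-valued uniform algebra with scalar part $\fA$ — and this is routine, being essentially contained in the Remark and in \cite{Abtahi-BJMA}; no new idea beyond Theorem~\ref{thm:Ditkin-when-fA-has-BRU} is needed.
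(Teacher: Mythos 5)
Your proposal is correct and follows the paper's (implicit) route exactly: the corollary is intended as an immediate application of Theorem~\ref{thm:Ditkin-when-fA-has-BRU}, once the Remark on Tomiyama products and Tomiyama's theorem identify $\fA\htp_\e A$ as an admissible, natural $A$-valued uniform algebra on $X$ with scalar part $\fA$, which is precisely the bookkeeping you carry out. The only minor remark is that your detour through strong Ditkin's condition to get regularity of $\fA$ is unnecessary (bounded relative units give regularity directly, e.g.\ via $1-u$, and the proof of Theorem~\ref{thm:Ditkin-when-fA-has-BRU} already extracts $\mis(\fA)=X$ from this), but this does not affect correctness.
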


\subsection{Strong Ditkin's condition}
If $\A$ is a strong Ditkin algebra then $\A$ is strongly regular and $\mis(\A) = X\times \mis(A)$.
Also, by
Theorem \ref{thm:if-AA-has-P-then-fA-and-A-have-P}~(\ref{item:(st)Ditkin}),
the associated algebras $\fA$ and $A$ are strong Ditkin. We show that the converse is true
if each of the following conditions holds;
\begin{enumerate}
  \item $\A$ is an $A$-valued uniform algebra,
  \item $\fA A$, the subalgebra of $\A$ generated by $\fA\cup A$, is dense in $\A$.
\end{enumerate}

\begin{thm}\label{thm:st-Ditkin-for-uniform-algebras}
 Let $\A$ be an $A$-valued uniform algebra.
 If $\fA$ and $A$ are strong Ditkin algebras then $\A$ is a strong Ditkin algebra.
\end{thm}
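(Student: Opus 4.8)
The plan is to reduce the statement to two properties already treated in this section. Recall from Section~\ref{sec:pre} that, for a semisimple Banach function algebra, being a strong Ditkin algebra is equivalent to being strongly regular \emph{and} admitting bounded relative units; so it suffices to verify these two properties for $\A$, and each of them follows from an earlier theorem once the hypothesis is unpacked.

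First, since $\fA$ and $A$ are strong Ditkin algebras they are, in particular, strongly regular, and they admit bounded relative units (strong Ditkin algebras admit bounded relative units, as recalled in Section~\ref{sec:pre}). Because $\A$ is an $A$-valued uniform algebra, $\fA$ admits bounded relative units, and $A$ is strongly regular, Theorem~\ref{thm:st-regularity-when-fA-has-BRU} shows that $\A$ is strongly regular; in particular $\A$ is natural, $\mis(\A)=X\times\mis(A)$, and $\A$ is normal. Independently, since $\fA$ and $A$ admit bounded relative units, Theorem~\ref{thm:BRU} shows that $\A$ admits bounded relative units at every $\tau\in\mis(\A)$. Finally, $\A$ is semisimple---if $f\in\A$ and $\hat f\equiv 0$ on $\mis(\A)$, then $\phi(f(x))=0$ for all $x\in X$ and $\phi\in\mis(A)$, whence $f=0$ by semisimplicity of $A$---so the characterization in Section~\ref{sec:pre} applies and $\A$ is a strong Ditkin algebra.

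The only step that is more than bookkeeping is the implication ``strongly regular $+$ bounded relative units $\Rightarrow$ strong Ditkin'': one has to manufacture, for each $\tau$, a bounded approximate identity for $I_\tau$ lying inside $J_\tau$, which relies on the simultaneous-approximation argument behind \cite[Corollary 2.9.15]{Dales} and on strong regularity to push that approximate identity from $I_\tau$ into $J_\tau$. Since this is exactly what is recorded in Section~\ref{sec:pre}, I anticipate no real obstacle; the work is in checking the hypotheses. A more self-contained alternative, parallel to the proofs of Theorems~\ref{thm:st-regularity-when-fA-has-BRU} and \ref{thm:BRU}, would fix $\tau_0=\e_{x_0}\dm\phi_0$, combine a bounded approximate identity of the ideal of $A$ at $\phi_0$ (taken inside $J_{\phi_0}$) with bounded relative units of $\fA$ at $x_0$ into elements of the form $\U-(\U-g)(\U-a)$, and then verify membership in $J_{\tau_0}$, boundedness of the resulting net (this is where the uniform-algebra hypothesis is essential), and the estimate $\|F-F(\U-(\U-g)(\U-a))\|_X=\|F(\U-g)(\U-a)\|_X$ for $F\in I_{\tau_0}$ by splitting $X$ according to the size of $\|F(x)-F(x_0)\|$. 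In that route one still has to assemble these into a genuine approximate identity (again via \cite[Corollary 2.9.15]{Dales}), so the reduction to the two earlier theorems is the shorter path.
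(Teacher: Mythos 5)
Your proposal is correct, but it takes a genuinely different route from the paper's proof. You reduce the theorem to the characterization recorded in Section~\ref{sec:pre} (for a normal, semisimple Banach function algebra on its character space, strong Ditkin's condition at a point is equivalent to strong regularity plus bounded relative units there), after verifying its hypotheses for $\A$: strong regularity of $\A$ via Theorem~\ref{thm:st-regularity-when-fA-has-BRU} (using that the strong Ditkin uniform algebra $\fA$ admits bounded relative units and that $A$ is strongly regular), bounded relative units for $\A$ via Theorem~\ref{thm:BRU}, together with semisimplicity, naturality and normality of $\A$; all of these checks are sound. The paper instead argues directly at each $\tau_0=\e_{x_0}\dm\phi_0$: for every finite set $\cF\subset I_{\tau_0}$ and $\e>0$ it builds $G_{\cF,\e}=g+u_\alpha-gu_\alpha\in J_{\tau_0}$ from a suitable member $u_\alpha$ of the bounded approximate identity of $I_{\phi_0}$ in $J_{\phi_0}$ and a bounded relative unit $g$ of $\fA$ at $x_0$, and shows $\|f-fG_{\cF,\e}\|_X\leq(1+M_{x_0})\e$ for $f\in\cF$, so the net $(G_{\cF,\e})$ is itself a bounded approximate identity contained in $J_{\tau_0}$ with explicit bound $M_{x_0}+M_{\phi_0}+M_{x_0}M_{\phi_0}$. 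Your route is shorter and recycles earlier theorems, but it leans on the abstract equivalence --- hence on Feinstein's results and \cite[Corollary 2.9.15]{Dales} --- to convert strong regularity plus bounded relative units back into an approximate identity lying in $J_{\tau_0}$; the paper's construction bypasses that machinery, produces the approximate identity in $J_{\tau_0}$ explicitly (only a routine directed-set argument over pairs $(\cF,\e)$ remains), and makes visible where the uniform-algebra hypothesis and the bounds enter. Both arguments use the same external fact that a uniform algebra satisfying strong Ditkin's condition at a point has bounded relative units there, so neither is more self-contained on that score.
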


\begin{proof}
  By Theorem \ref{thm:regularity-for-vv-FA}, the algebra $\A$ is regular
  and $\mis(\A)=X\times \mis(A)$. Take $x_0\in X$ and $\phi_0\in \mis(A)$,
  and let $\tau_0=\e_{x_0} \dm \phi_0$. We show that $\A$ satisfies strong Ditkin's condition
  at $\tau_0$ (equivalently, $I_{\tau_0}$ has a bounded approximate identity in $J_{\tau_0}$).
  First, we show that, for some $M>0$, for every $\e>0$, and for every finite set $\cF$ in
  $I_{\tau_0}$, there is an element $G=G_{\cF,\e}$ in $J_{\tau_0}$ such that $\|G\|_X\leq M$ and
    \begin{equation*}
       \|f-f G\|_X \leq \e \quad (f\in\cF).
    \end{equation*}

  Let $\cF=\set{f_1,\dotsc,f_n}$ be a finite set in $I_{\tau_0}$ and set
  $a_i=f_i(x_0)$, $1\leq i \leq n$. Then $\phi_0(a_i) = \phi_0\bigl(f_i(x_0)\bigr)=\tau_0(f_i)=0$.
  Since $A$ satisfies strong Ditkin's condition at $\phi_0$, the ideal $I_{\phi_0}$ has
  a bounded approximate identity $(u_\alpha)$ in $J_{\phi_0}$ with bound $M_{\phi_0}$, say.
  Choose $\alpha$ such that $\|a_i - a_i u_\alpha \| < \e$, for all $i=1,\dotsc,n$, and let $V_0$ be a neighbourhood
  of $\phi_0$ such that $\hat u_\alpha=0$ on $V_0$. Set
  \begin{equation*}
    U=\bigset{x\in X : \|f_i(x) - f_i(x)u_\alpha\| < \e, i=1,\dotsc,n}.
  \end{equation*}

  Then $U$ is a neighbourhood of $x_0$ in $X$ and $K=X\setminus U$ is compact in $X$.
  Since the uniform algebra $\fA$ satisfies strong Ditkin's
  condition at $x_0$, it admits bounded relative units at that point. Hence, there is a constant
  $M_{x_0}>0$, a neighbourhood $U_0$ of $x_0$, and a function $g\in \fA$ such that
  \begin{equation*}
     g|_{U_0}=0,\ g|_{X\setminus U}=1,\ \|g\|_X \leq M_{x_0}.
  \end{equation*}

  Let $M=M_{x_0}+ M_{\phi_0}+M_{\phi_0}M_{x_0}$. If $G_{\cF,\e}= g + u_\alpha - gu_\alpha$,
  then $\hat G_{\cF,\e}=0$ on $U_0 \times V_0$ so that $G_{\cF,\e}\in J_{\tau_0}$, and
  $\|G_{\cF,\e}\|_X \leq M$. Moreover, for every $x\in X$, we have
  \begin{align*}
    \|f_i(x) - f_i(x) G_{\cF,\e}(x) \|
      & = \| \bigl(\U-g(x)\bigr)\bigl(f_i(x)-f_i(x)u_\alpha\bigr)\| \\
      & \leq
      \left\{ \!\!\!
        \begin{array}{ll}
          0 , & \hbox{$x\in X\setminus U$;} \\
          (1+M_{x_0})\e, & \hbox{$x\in U$.}
        \end{array}
      \right.
  \end{align*}
  Therefore, $\|f_i-f_iG_{\cF,\e}\|_X \leq (1+M_{x_0})\e$, for all $f_i\in\cF$.

  Now, a traditional argument shows that
  the net $(G_{\cF,\e})$, where $\cF$ runs over all finite subsets of $I_{\tau_0}$ and $\e\in (0,\infty)$,
  forms a bounded approximate identity in $J_{\tau_0}$ for $I_{\tau_0}$.
  Therefore, $\A$ satisfies strong Ditkin's condition at $\tau_0$.
\end{proof}

\begin{thm}\label{thm:st-Ditkin-when-fA.A-is-dense}
 For the $A$-valued function algebra $\A$, suppose that $\fA A$ is dense in $\A$.
 If\/ $\fA$ and $A$ are strong Ditkin algebras, then $\A$ is a strong Ditkin algebra.
\end{thm}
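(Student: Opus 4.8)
The plan is to obtain this statement as a corollary of results already established, rather than to repeat a net construction of the type used for Theorem~\ref{thm:st-Ditkin-for-uniform-algebras}. The guiding observation is that a strong Ditkin algebra is precisely a Banach function algebra that is strongly regular and admits bounded relative units: one direction is immediate, and for the other, as recalled in Section~\ref{sec:pre} (the discussion following Definition~\ref{dfn:st-Ditkin}), a normal, unital Banach function algebra which is strongly regular at $\phi$ and whose ideal $I_\phi$ carries a bounded approximate identity admits bounded relative units at $\phi$ (\cite[Theorem~4]{Fein-1995-note}; see also \cite[Corollary~4.1.33]{Dales}), while conversely a Banach function algebra that is strongly regular and admits bounded relative units has, at each character, a bounded approximate identity for $I_\phi$ lying in $J_\phi$. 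So it suffices to prove that $\A$ is strongly regular and that $\A$ admits bounded relative units.

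For the first point: since $\fA$ and $A$ are strong Ditkin, they are in particular strongly regular, so Theorem~\ref{thm:strong-reg-when-fA.A-is-dense} applies under the standing density hypothesis and yields that $\A$ is strongly regular. Consequently $\A$ is regular, hence normal by \cite[Corollary~4.2.9]{CBA}, and $\mis(\A)=X\times\mis(A)$ by Proposition~\ref{prop:if-A-is-reg-then-mis(A)=Xxmis(A)} (here $\mis(\fA)=X$, as a regular Banach function algebra is natural). For the second point: $\fA$ and $A$ are normal, unital Banach function algebras on their character spaces which, being strong Ditkin, are strongly regular at every point and have a bounded approximate identity for $I_\phi$ there; by the criterion just quoted, both $\fA$ and $A$ admit bounded relative units. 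Theorem~\ref{thm:BRU} then gives that $\A$ admits bounded relative units. Putting the two points together, $\A$ is a normal, unital Banach function algebra on $\mis(\A)$ that is strongly regular and admits bounded relative units, so by the converse implication recalled in the first paragraph, $I_\tau$ has a bounded approximate identity contained in $J_\tau$ for every $\tau\in\mis(\A)$; that is, $\A$ is a strong Ditkin algebra.

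I do not expect a genuine obstacle along this route; the only things to be careful about are bookkeeping matters — checking that the density hypothesis really does transport strong regularity to $\A$ (which is exactly Theorem~\ref{thm:strong-reg-when-fA.A-is-dense}) and that the normality and unit assumptions required by the cited characterisations hold for $\A$, $\fA$ and $A$. If a self-contained argument is preferred, one can instead mimic the proof of Theorem~\ref{thm:st-Ditkin-for-uniform-algebras}: fix $\tau_0=\e_{x_0}\dm\phi_0\in\mis(\A)$, take a bounded approximate identity $(g_\beta)$ for $I_{x_0}$ lying in $J_{x_0}\subset\fA$ and one $(u_\alpha)$ for $I_{\phi_0}$ lying in $J_{\phi_0}\subset A$, set $G_{\alpha,\beta}=g_\beta+u_\alpha-g_\beta u_\alpha\in J_{\tau_0}$, bound $\norm{G_{\alpha,\beta}}$ using submultiplicativity of the norm of $\A$, first test $G_{\alpha,\beta}$ against elements $f=\sum_i f_i a_i$ of $\fA A$ after reducing each $f_i$ and the constant part to the relevant maximal ideals as in Theorem~\ref{thm:strong-reg-when-fA.A-is-dense}, and finally pass to an arbitrary $f\in I_{\tau_0}$ via the density of $\fA A$ in $\A$. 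In that version the delicate step is the usual one when a bounded approximate identity is assembled through a dense subalgebra: choosing the net parameters uniformly enough to keep $\norm{f-fG_{\alpha,\beta}}$ small for every $f$ in a prescribed finite subset of $I_{\tau_0}$ before taking the limit.
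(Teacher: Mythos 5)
Your argument is correct, but it takes a genuinely different route from the paper. The paper proves this theorem directly: it takes bounded approximate identities $(g_\beta)\subset J_{x_0}$ for $I_{x_0}$ and $(u_\alpha)\subset J_{\phi_0}$ for $I_{\phi_0}$, forms the bounded net $G_\lambda=g_\beta+u_\alpha-g_\beta u_\alpha\in J_{\tau_0}$, verifies $FG_\lambda\to F$ first for $F\in\fA A$ (after the splitting $F=\sum_i(f_i-f_i(x_0))a_i+a$) and then for general $F\in I_{\tau_0}$ by density, the passage to the closure being painless precisely because the net is bounded (so the ``delicate step'' you flag at the end is handled by the standard estimate $\norm{F-FG_\lambda}\le\norm{F-F_0}(1+\norm{G_\lambda})+\norm{F_0-F_0G_\lambda}$, with no re-tuning of parameters). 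You instead deduce the theorem from results already in the paper: Theorem \ref{thm:strong-reg-when-fA.A-is-dense} gives strong regularity of $\A$; the characterisation of strong Ditkin algebras among normal unital Banach function algebras (the unnumbered Proposition in Section \ref{sec:pre}, resting on Feinstein's theorem in one direction and on the semisimplicity argument plus \cite[Corollary 2.9.15]{Dales} in the other) converts the strong Ditkin hypotheses on $\fA$ and $A$ into bounded relative units; Theorem \ref{thm:BRU} lifts these to $\A$; and the converse direction of the same characterisation (legitimate for $\A$, which is semisimple, unital, and normal since it is regular) yields the strong Ditkin property. Your reduction is shorter and shows the theorem is essentially a formal consequence of the regularity/BRU machinery, but it leans on Feinstein's nontrivial external result (which indeed requires the normality and unitality you check — without them the equivalence fails, as Feinstein's later example shows), whereas the paper's construction is self-contained, never invokes that theorem in this proof, and produces an explicit bounded approximate identity with an explicit bound $2M+M^2$. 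Both arguments use the same standing hypotheses ($\mis(\A)=\mis(\fA)\times\mis(A)$ and density of $\fA A$), so the proposal is sound as written.
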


\begin{proof}
  As in the proof of Theorem \ref{thm:st-Ditkin-for-uniform-algebras}, we need to show that
  $I_{\tau_0}$ has a bounded approximate identity in $J_{\tau_0}$, where $\tau_0=\e_{x_0} \dm \phi_0$
  belongs to $\mis(\A)=X\times \mis(A)$. Let $(u_\alpha)$ be a bounded approximate identity for $I_{\phi_0}$,
  contained in $J_{\phi_0}$, and let $(g_\beta)$ be a bounded approximate identity for $I_{x_0}$,
  contained in $J_{x_0}$. For $\lambda=(\alpha,\beta)$, set $G_\lambda = g_\beta + u_\alpha - g_\beta u_\alpha$.
  It is easily verified that $(G_\lambda)$ is a bounded net contained in $J_{\tau_0}$. We show that
  $FG_\lambda \to F$, for every $F\in I_{\tau_0}$. First, assume that $F\in \fA A$
  is of the form $F=f_1a_1+\dotsb+f_na_n$. Write
  \begin{equation*}
    F = \sum_{i=1}^n \bigl(f_i-f_i(x_0)\bigr)a_i + \sum_{i=1}^n f_i(x_0)a_i
      = \sum_{i=1}^n g_i a_i + a,
  \end{equation*}
  where $g_i=f_i-f_i(x_0)$ and $a = \sum_{i=1}^n f_i(x_0)a_i$.
  Then, $g_i\in I_{x_0}$ and
  \begin{equation*}
    \phi_0(a) = \phi_0\Bigprn{\sum_{i=1}^n f_i(x_0)a_i} = \sum_{i=1}^n f_i(x_0)\phi_0(a_i)
    = \tau_0(F) = 0.
  \end{equation*}

  Therefore, $au_\alpha \to a$ and $g_i g_\beta \to g_i$, for $i=1,\dotsc,n$.
  Let $M$ be a positive number such that
  $\|u_\alpha\|\leq M$, for all $\alpha$, $\|g_\beta\|\leq M$, for all $\beta$,
  and $\|a_i\|\leq M$, for $i=1,\dotsc,n$. Then
  \begin{align*}
    \|F- FG_\lambda\|
     & = \Bignorm{\sum_{i=1}^n g_ia_i + a - \sum_{i=1}^n g_ia_i G_\lambda -aG_\lambda} \\
     & \leq \sum_{i=1}^n\|g_i- g_iG_\lambda\|\|a_i\| + \|a-aG_\lambda\| \\
     & \leq M \sum_{i=1}^n\|g_i - g_i g_\beta - (g_i- g_ig_\beta) u_\alpha\|
         + \|a-au_\alpha - g_\beta(a-au_\alpha)\| \\
     & \leq M(1+M) \sum_{i=1}^n\|g_i - g_i g_\beta\| + (1+M)\|a-au_\alpha\|.
  \end{align*}

  This yields that $\|F- FG_\lambda\|\to 0$. Note that
  \begin{equation*}
    \|G_\lambda\| = \|g_\beta + u_\alpha - g_\beta u_\alpha\| \leq 2M+M^2.
  \end{equation*}

  Now, consider the general case of $F\in I_{\tau_0}$. Since $\fA A$ is dense in $\A$,
  for every $\e>0$, there exists $F_1\in \fA A$ such that $\|F-F_1\|<\e/2$. Let
  $F_0=F_1-\tau_0(F_1)\U$. Then $F_0\in \fA A$ and $\tau_0(F_0)=0$ so that, by the previous part,
  we get $\|F_0 - F_0G_\lambda\|\to0$. Moreover,
  \begin{equation*}
    \|F-F_0\| = \|F-F_1+\tau_0(F_1)\U\| \leq \|F-F_1\| + |\tau_0(F_1-F)| \leq \frac\e2+\frac\e2 = \e.
  \end{equation*}
  Hence, for every $\lambda$, we get
  \begin{align*}
   \|F - FG_\lambda\|
     & = \|(F-F_0) - (F-F_0)G_\lambda + F_0 - F_0 G_\lambda \|\\
     & \leq \|F-F_0\| (1+\|G_\lambda\|) + \|F_0 - F_0G_\lambda\| \\
     & \leq \e(1+2M+M^2) + \|F_0 - F_0G_\lambda\|.
  \end{align*}
  This yields that $\limsup_\lambda \|F - FG_\lambda\| \leq \e(1+M)^2$. Since $\e$ is arbitrary,
  we get
  \begin{equation*}
    \lim_\lambda \|F-FG_\lambda\| = 0 \quad (F\in I_{\tau_0}).
    \qedhere
  \end{equation*}
\end{proof}

As a consequence of the above theorem, we get the following result.

\begin{cor}\label{cor:strong-Ditkin-for-fA-tp-A}
 If\/ $\fA$ and $A$ are strong Ditkin algebras, then any Tomiyama product $\fA \htp_\gamma A$
 is a strong Ditkin algebra.
\end{cor}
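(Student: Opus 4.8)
The plan is to deduce Corollary \ref{cor:strong-Ditkin-for-fA-tp-A} directly from Theorem \ref{thm:st-Ditkin-when-fA.A-is-dense} by verifying its single hypothesis in the Tomiyama-product setting. Recall that in the Remark following the definition of Banach $A$-valued function algebras we identified a Tomiyama product $\fA\htp_\gamma A$ with a Banach $A$-valued function algebra on $\cX=\mis(\fA)$, with associated scalar subalgebra $\fA$ and with $A$ sitting inside as the constant functions; by Tomiyama's theorem, $\mis(\fA\htp_\gamma A)=\mis(\fA)\times\mis(A)$, so this quadruple is of the type considered throughout Section \ref{sec:main}. Thus it suffices to check that the subalgebra generated by $\fA\cup A$ inside $\fA\htp_\gamma A$ is dense.

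First I would observe that, under the identification $f\tp a\leftrightarrow \hat f\cdot a$, an element $f\in\fA$ corresponds to $f\tp\U$ and an element $a\in A$ corresponds to $\U\tp a$, so the product $(f\tp\U)(\U\tp a)=f\tp a$ lies in the subalgebra generated by $\fA\cup A$. Consequently that subalgebra contains every finite sum $\sum_{i=1}^n f_i\tp a_i$, i.e. it contains the whole algebraic tensor product $\fA\otimes A$. Since $\fA\htp_\gamma A$ is by definition the completion of $\fA\otimes A$ with respect to the cross-norm $\gamma$, the algebraic tensor product is dense in it; hence $\fA A$ is dense in $\fA\htp_\gamma A$. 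Then Theorem \ref{thm:st-Ditkin-when-fA.A-is-dense} applies verbatim: if $\fA$ and $A$ are strong Ditkin algebras, so is $\fA\htp_\gamma A$.

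I do not anticipate any genuine obstacle here; the only point requiring a word of care is the bookkeeping identifying $\fA$ and $A$ with the appropriate constant-type sub-objects of $\fA\htp_\gamma A$ so that the phrase ``the subalgebra of $\A$ generated by $\fA\cup A$'' literally matches the $\fA A$ of Theorem \ref{thm:st-Ditkin-when-fA.A-is-dense}. Once that identification is made explicit (as it already is in the Remark), the density of $\fA\otimes A$ in its $\gamma$-completion does all the work. Accordingly the proof is short:

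\begin{proof}
  Regard $\fA\htp_\gamma A$ as an admissible Banach $A$-valued function algebra on $\cX=\mis(\fA)$,
  with associated scalar subalgebra $\fA$ and with $A$ embedded as the constant functions; by
  Tomiyama's theorem $\mis(\fA\htp_\gamma A)=\mis(\fA)\times\mis(A)$. Under the identification
  $f\tp a\leftrightarrow \hat f\cdot a$, we have $f=f\tp\U$ for $f\in\fA$ and $a=\U\tp a$ for $a\in A$,
  so $(f\tp\U)(\U\tp a)=f\tp a$ belongs to the subalgebra generated by $\fA\cup A$. Hence that
  subalgebra contains the algebraic tensor product $\fA\otimes A$, which is dense in its completion
  $\fA\htp_\gamma A$. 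Thus $\fA A$ is dense in $\fA\htp_\gamma A$, and the conclusion follows from
  Theorem \ref{thm:st-Ditkin-when-fA.A-is-dense}.
\end{proof}
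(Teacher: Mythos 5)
Your proof is correct and follows exactly the route the paper intends: the corollary is stated as an immediate consequence of Theorem \ref{thm:st-Ditkin-when-fA.A-is-dense}, the only point to verify being that the subalgebra generated by $\fA\cup A$ contains the algebraic tensor product $\fA\otimes A$ and is therefore dense in the $\gamma$-completion. Your bookkeeping of the identifications $f\leftrightarrow f\tp\U$, $a\leftrightarrow\U\tp a$ is precisely the implicit content of the paper's Remark on Tomiyama products.
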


\paragraph{Acknowledgment} The authors would like to express their sincere gratitude to the
anonymous referee for their careful reading and suggestions that improve the presentation
of the paper.

\end{document}